\numberwithin{equation}{section}
\theoremstyle{plain}
\newtheorem{thm}{\protect\theoremname}[section]
\theoremstyle{plain}
\newtheorem{lem}[thm]{\protect\lemmaname}
\theoremstyle{definition}
\newtheorem{defn}[thm]{\protect\definitionname}
\DeclareMathAlphabet{\mathcal}{OMS}{cmsy}{m}{n}
\providecommand{\lemmaname}{Lemma}
\providecommand{\theoremname}{Theorem}
\providecommand{\definitionname}{Definition}
\providecommand{\lemmaname}{Lemma}
\providecommand{\theoremname}{Theorem}
\begin{document}

\title{Random vortex and expansion-rate model for \\ Oberbeck-Boussinesq fluid flows}

\author{Zihao Guo\thanks{Zhongtai Securities Institute for Financial Studies, Shandong University, Jinan, China, 250100, Email:
\protect\href{mailto:gzhsdu@mail.sdu.edu.cn}{gzhsdu@mail.sdu.edu.cn}}, \ Zhongmin Qian\thanks{Mathematical Institute, University of Oxford, Oxford, United Kingdom, OX2 6GG, and Oxford Suzhou Centre for Advanced Research, Suzhou, China.  Email:
\protect\href{mailto:qianz@maths.ox.ac.uk}{qianz@maths.ox.ac.uk}}, and Zihao Shen\thanks{Mathematical Institute, University of Oxford, Oxford, United Kingdom, OX2 6GG, Email: 
\protect\href{mailto:zihao.shen@bnc.ox.ac.uk}{zihao.shen@bnc.ox.ac.uk}}}
\maketitle
\begin{abstract}
By using a formulation of a class of compressible viscous flows with a heat source via vorticity and expansion-rate, we study the Oberbeck-Boussinesq flows. To this end we establish a new integral representation for solutions of
parabolic equations subject to certain boundary condition, which allows us to develop a random vortex method for certain
compressible flows and to compute numerically solutions of their dynamical models. Numerical experiments are carried
out, which not only capture detailed B\'enard convection but also are capable of providing additional information on the
fluid density and the dynamics of expansion-rate of the flow.

\medskip{}

\emph{Key words}: conditional diffusions, compressible fluid flow, expansion-rate of flow, Monte-Carlo simulation, random
vortex method.

\medskip{}

\emph{MSC classifications}: 76M35, 76M23, 60H30, 65C05, 68Q10 

\end{abstract}

\section{Introduction}

The random vortex method is a useful tool for simulating incompressible viscous flows (cf. \citep{Cottet and Koumoutsakos 2000,Majda and Bertozzi 2002},
also \citep{Constantin2001a,Constantin2001b,ConstantinIyer2011}, \citep{Bunello1999,Busnello2005}, \citep{Drivas2017a},
\citep{EyinkGuptaZaki2020a}, \citep{Chorin1973,Chorin1980} and etc. for details). The method has been generalized to wall-bounded incompressible viscous
flows (cf. \citep{Drivas2017}, \citep{EyinkGuptaZaki2020b} and \citep{LiQianXu2023,Z.Qian 2022}). In a recent work \citep{Z.Qian and Z.Shen}, 
a formulation via the vorticity and the expansion-rate of (possibly compressible) viscous flows has been established, which
has been shown to be equivalent to the Navier-Stokes equations for a class of (compressible) flows (cf. \citep{Landau-LifshitzFluids}).
This vorticity and expansion-rate formulation allows to develop a random vortex and expansion-rate method to simulate certain
compressible flows. Due to the highly nonlinear and complicated nature of compressible flows observed in nature, when applying
the general theory to compressible flows, certain approximation procedure must be applied first before Monte-Carlo simulations
may be carried out. In the present paper, we consider a compressible flow demonstrating the phenomena of B\'enard's convection
(cf. \citep{Benard1900,Boussinesq1903,Drazin-Reid1981}), applying the vortex and expansion-rate formulation to the fluid
dynamical equations describing B\'enard's convection we propose an approximate compressible flow model, which, in contrast
to the Oberbeck-Boussinesq flows, retains the property of compressibility. We then employ the random vortex and expansion-rate approach
to this model and derive implicit stochastic integral representations for solutions to the approximate model in the spirit
of McKean-Vlasov stochastic differential equations. The stochastic representations may be used to carry out numerical experiments
to demonstrate the small variation of the fluid density and the expansion-rate which measures the compressibility, the information
which is missing in the Oberbeck-Boussinesq model.

The paper is organized as follows. In Section 2, we review essential preliminary knowledge related to vorticity and the expansion-rate
formulation of a viscous flow, alongside a modified version of the Oberbeck-Boussinesq equations. Subsequently, in Section
3, we propose an integral representation for parabolic equations. To represent velocity and temperature, the Biot-Savart
law is introduced in Section 4, followed by the definition of three Brownian particles corresponding to the coupled stochastic
differential equations outlined in the representation theorem of Section 3. Utilizing the representation formula in Section
3 and the Biot-Savart law in Section 4, we establish representations for the dynamical variables of the Oberbeck-Boussinesq
model in $\mathbb{R}_{+}^{d}$, which are delineated in Sections 5. Finally, we
present numerical experiments of Oberbeck-Boussinesq flow with heat source applying from the boundary under wall-bounded domain in Section 6.

\section{The vorticity and expansion-rate function formulation}

In this section, we recall the vorticity and expansion-rate formulation of a viscous flow (maybe compressible however) past
a solid wall. This formulation (with for a compressible flow constrained in a bounded region $D$ with boundary $\partial D$)
has been proposed in \citep{Z.Qian and Z.Shen} by identifying the (in general non-homogeneous) appropriate boundary conditions.
This formulation can be therefore implemented, and can be regarded as an extension of the random vortex method to compressible
flows.

Let $u(x,t)$, $\rho(x,t)$ and $p(x,t)$ be the velocity, the fluid density and the pressure of a viscous flow, respectively.
Let $\omega=\nabla\wedge u$ be the vorticity and $\phi=\nabla\cdot u$ be the expansion-rate. Therefore $u$ and $\omega$
are (time-dependent) vector fields, and $\rho$ and $p$ are scalar functions. The dynamics of the flow are determined by
the Navier-Stokes equations for compressible flows (cf. Landau-Lifschitz \citep{Landau-LifshitzFluids}): 
\begin{equation}
\partial_{t}\rho+\nabla\cdot(\rho u)=0\label{C1}
\end{equation}
and

\begin{equation}
\rho\left(\partial_{t}+u\cdot\nabla\right)u=\mu\Delta u-\nabla(p-\lambda\phi)+\rho F,\label{NS-C1}
\end{equation}
where $\lambda=\zeta+\frac{d-2}{d}\mu$ and $F$ is an external force. In this paper, $F$ is enforced to the fluid flow through
a heating source. The the velocity is subject to the non-slip condition that $u(x,t)=0$ for $x\in\partial\varOmega$ and $t>0$.

We consider only Newton flows where the viscosity coefficients $\mu$, $\zeta$ are constant. Then the vorticity $\omega$
and expansion-rate $\phi$ evolve according to the following equations (cf. \citep{Z.Qian and Z.Shen} for the details of
the derivation): 
\begin{align}
\partial_{t}\phi & =\sum_{i,j}\frac{\partial}{\partial x^{j}}\left(\frac{\mu+\lambda}{\rho}\delta_{ij}\frac{\partial}{\partial x^{i}}\phi\right)-(u\cdot\nabla)\phi-\nabla\rho^{-1}\cdot\nabla p\nonumber \\
 & -\mu\nabla\rho^{-1}\cdot(\nabla\wedge\omega)+|\omega|^{2}-|\nabla u|^{2}-\frac{1}{\rho}\Delta p+\nabla\cdot F.\label{phi-1}
\end{align}
and 
\begin{align}
\partial_{t}\omega & =\sum_{i,j}\frac{\partial}{\partial x^{j}}\left(\frac{\mu}{\rho}\delta_{ij}\frac{\partial}{\partial x^{i}}\omega\right)-(u\cdot\nabla)\omega+(\omega\cdot\nabla)u-\phi\omega\nonumber \\
 & -\mu\frac{\partial\rho^{-1}}{\partial x^{j}}\nabla\omega^{j}-\nabla\rho^{-1}\wedge\nabla p+(\lambda+\mu)\nabla\rho^{-1}\wedge\nabla\phi+\nabla\wedge F.\label{om-1}
\end{align}
For 2-dimensional flows, $(\omega\cdot\nabla)u$ vanishes identically, and the vorticity transport equation is simplified as followed:
\begin{align}
\partial_{t}\omega & =\sum_{i,j}\frac{\partial}{\partial x^{j}}\left(\frac{\mu}{\rho}\delta_{ij}\frac{\partial}{\partial x^{i}}\omega\right)-(u\cdot\nabla)\omega-\phi\omega\nonumber \\
 & -\mu\frac{\partial\rho^{-1}}{\partial x^{j}}\nabla\omega^{j}-\nabla\rho^{-1}\wedge\nabla p+(\lambda+\mu)\nabla\rho^{-1}\wedge\nabla\phi+\nabla\wedge F.\label{2D-vort}
\end{align}

These evolution equations together with appropriate boundary conditions for $\phi$ and $\omega$ are equivalent to the Navier-Stokes
equations, based on which we may develop a version of the random vortex method for compressible flows. While these equations
are rather complicated, it is desirable to work with simplified equations by taking into specific features of flows in question.

Let us consider a viscous flow past a solid wall with a small fluid density gradient, which means that the flow is nearly incompressible,
known as Oberbeck-Boussinesq flows. Such flows are typically approximated as incompressible flows and studied in existing literature via the incompressible Navier-Stokes equations.
However, in our paper, we use a simplified version of the vortex and expansion-rate formulation (\ref{phi-1}, \ref{om-1})
to implement numerical schemes of compressible flows via the random vortex method. To state our simplified equations, we observe that for incompressible
flows (\ref{phi-1}) leads to the following constraint: 
\[
-\nabla\rho^{-1}\cdot\nabla p-\mu\nabla\rho^{-1}\cdot(\nabla\wedge\omega)+|\omega|^{2}-|\nabla u|^{2}-\frac{1}{\rho}\Delta p+\nabla\cdot F=0
\]
so that the terms on the left-hand side will be omitted in (\ref{phi-1}). Similarly, since the viscosity coefficients and
the gradient of the density $\nabla\rho$ are small, so that the following terms 
\[
-\mu\frac{\partial\rho^{-1}}{\partial x^{j}}\nabla\omega^{j}-\nabla\rho^{-1}\wedge\nabla p+(\lambda+\mu)\nabla\rho^{-1}\wedge\nabla\phi
\]
are ignored in (\ref{om-1}) or (\ref{2D-vort}).

Therefore we consider the following system which is a variation of the Oberbeck-Boussinesq equations: 
\begin{equation}
\partial_{t}\rho+u\cdot\nabla\rho=-\phi\rho,\label{AA-c}
\end{equation}
\begin{equation}
%\partial_{t}\phi+(u\cdot\nabla)\phi=\sum_{i,j}\frac{\partial}{\partial x^{j}}\left(\frac{\mu+\lambda}{\rho}\delta_{ij}\frac{\partial}{\partial x^{i}}\phi\right),\label{AA-2}
\partial_{t}\phi+(u\cdot\nabla)\phi=\sum_{i,j}\frac{\partial}{\partial x^{j}}\left(\frac{\mu+\lambda}{\rho}\delta_{ij}\frac{\partial}{\partial x^{i}}\phi\right),\label{AA-2}
\end{equation}
\begin{equation}
\partial_{t}\omega+(u\cdot\nabla)\omega=\sum_{i,j}\frac{\partial}{\partial x^{j}}\left(\frac{\mu}{\rho}\delta_{ij}\frac{\partial}{\partial x^{i}}\omega\right)-\phi\omega+\nabla\wedge F(\theta),\label{AA-3}
\end{equation}
\begin{equation}
\partial_{t}\theta+(u\cdot\nabla)\theta=\frac{\kappa}{\rho_{0}}\Delta\theta+g\label{AA-4}
\end{equation}
and 
\begin{equation}
\Delta u=-\nabla\wedge\omega+\nabla\phi\label{AA-5}
\end{equation}
in $D\times[0,\infty)$, where $\kappa>0$ is heat conducting constant, $\theta$ represents the temperature and $g$ represents
the heating resource to the fluid. The representation theorem of temperature involves the gradient of temperature equation, so under the exist assumption, we need to approximate the temperature equation by substituting $\rho$ with initial density $\rho_0$. Further more, $\rho_{0}\in C^{1}(\mathbb{R}^{d})$, we can extend it to be $$\rho_{0}(x,t)=\rho(x,0)\quad(x,t)\in\mathbb{R}^{d}\times[0,\infty).$$
In practice we usually take $\rho_{0}$ to be a constant.

The boundary conditions are set as the following: 
\begin{equation}
u=0,\quad\phi=\nabla\cdot u,\quad\nabla\cdot\omega=0,\quad(\omega-\nabla\wedge u)^{\parallel}=0\quad\textrm{ in }\partial D\label{B-c}
\end{equation}
and 
\begin{equation}
\theta(x,t)=\theta_{b}(x,t)\quad\textrm{ for }x\in\partial D.\label{t-bc}
\end{equation}
The initial state is determined by the initial velocity $u_{0}$ and the initial density $\rho_{0}$, and 
\begin{equation}
\rho(\cdot,0)=\rho_{0},\quad\phi(\cdot,0)=\phi_{0}=\nabla\cdot u_{0},\quad\omega(\cdot,0)=\omega_{0}=\nabla\wedge u_{0}\quad\textrm{ and }\theta(\cdot,0)=\theta_{0}\label{Int-1}
\end{equation}
where $u_{0}$, $\rho_{0}$ and $\theta_{0}$ are given.

The well-posededness of the previous system of PDEs is however not studied in the present paper, and will be published in
a separate work. However we would like to mention the following fact. 
\begin{lem}
If $\rho,\phi,\omega,\theta$ and $u$ are solutions to the above initial and boundary problem, then 
\[
\phi=\nabla\cdot u\quad\textrm{ and }\omega=\nabla\wedge u\quad\textrm{ in }D.
\]
\end{lem}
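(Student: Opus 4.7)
The plan is to introduce the discrepancies $\psi := \phi - \nabla\cdot u$ and $v := \omega - \nabla\wedge u$ and prove that both vanish in $D$. First I would apply $\nabla\cdot$ to the Poisson equation (\ref{AA-5}): since $\nabla\cdot(\nabla\wedge\omega)\equiv 0$, this gives $\Delta(\nabla\cdot u) = \Delta\phi$, so $\Delta\psi = 0$ in $D$. Combined with the boundary condition $\phi = \nabla\cdot u$ on $\partial D$ taken from (\ref{B-c}), uniqueness for the Dirichlet problem of the Laplacian yields $\psi \equiv 0$, establishing $\phi = \nabla\cdot u$ throughout $D$.

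For the vorticity, I would apply $\nabla\wedge$ to (\ref{AA-5}). Using the identity $\nabla\wedge(\nabla\wedge\omega) = \nabla(\nabla\cdot\omega) - \Delta\omega$ and $\nabla\wedge\nabla\phi = 0$, one obtains $\Delta(\nabla\wedge u) = \Delta\omega - \nabla(\nabla\cdot\omega)$, so $\Delta v = \nabla(\nabla\cdot\omega)$. A parallel vector-identity computation gives $\nabla\wedge v = \nabla\wedge\omega - \nabla(\nabla\cdot u) + \Delta u$; substituting $\Delta u = -\nabla\wedge\omega + \nabla\phi$ from (\ref{AA-5}) collapses this to $\nabla\psi$, which vanishes by the previous paragraph. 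Hence $v$ is already curl-free in $D$, and its Laplacian reduces to $\nabla(\nabla\cdot\omega)$.

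The key technical step, and the place where I expect the main obstacle, is to propagate $\nabla\cdot\omega = 0$ into the interior. Taking the divergence of the vorticity equation (\ref{AA-3}), the forcing $\nabla\cdot(\nabla\wedge F)$ vanishes identically, and careful use of the identity $\phi = \nabla\cdot u$ just established, together with index manipulations, should convert the divergence of the transport, stretching, and diffusion terms into a closed linear parabolic equation for $\chi := \nabla\cdot\omega$ with diffusion coefficient $\mu/\rho$. With zero initial data $\chi(\cdot,0) = \nabla\cdot(\nabla\wedge u_0) = 0$ from (\ref{Int-1}) and zero boundary data $\chi = 0$ on $\partial D$ from (\ref{B-c}), uniqueness for the resulting parabolic initial-boundary value problem then forces $\chi \equiv 0$ on $D\times[0,\infty)$. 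The delicate part is verifying that after taking divergence the remaining $\omega$-dependent source terms either cancel or can be rewritten as a lower-order operator acting on $\chi$ itself.

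Finally, with $\nabla\cdot\omega \equiv 0$ established, the field $v$ satisfies $\nabla\cdot v = 0$, $\nabla\wedge v = 0$ in $D$, and $v^{\parallel} = 0$ on $\partial D$ by (\ref{B-c}). Under standard topological hypotheses on $D$ (for instance simply connected with connected boundary), the relation $\nabla\wedge v = 0$ allows one to write $v = \nabla h$ for a scalar $h$; then $\Delta h = \nabla\cdot v = 0$, while $v^{\parallel} = 0$ on $\partial D$ forces $h$ to be constant on $\partial D$. The maximum principle therefore yields $h \equiv \mathrm{const}$ and hence $v \equiv 0$, giving $\omega = \nabla\wedge u$ in $D$.
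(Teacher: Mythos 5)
The paper does not actually prove this lemma --- it is stated as a bare fact, with well-posedness of the system deferred to separate work --- so there is no in-paper proof to compare against; I can only assess your argument on its own terms. Your skeleton is the standard one and three of the four steps are sound: taking the divergence of (\ref{AA-5}) gives $\Delta(\phi-\nabla\cdot u)=0$ with zero Dirichlet data (for unbounded $D$ such as $\mathbb{R}^{d}_{+}$ you additionally need decay at infinity, since $x_{d}$ is harmonic and vanishes on the wall); the curl computation giving $\nabla\wedge v=\nabla\psi=0$ and $\Delta v=\nabla(\nabla\cdot\omega)$ is correct; and the final potential-theoretic step recovering $v\equiv 0$ from $\nabla\wedge v=0$, $\nabla\cdot v=0$ and $v^{\parallel}=0$ on $\partial D$ is fine under the topological hypotheses you state.

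The gap is exactly where you suspected it --- propagating $\nabla\cdot\omega=0$ into the interior --- and it is not merely delicate: for the system as written, the equation for $\chi=\nabla\cdot\omega$ does not close. Two things go wrong. First, (\ref{AA-3}) as printed contains no stretching term $(\omega\cdot\nabla)u$ (the text around (\ref{B-c-1}) and equation (\ref{omega mod}) suggest it is intended); without it, the cross term $(\partial_{k}u^{j})(\partial_{j}\omega^{k})$ produced by $\nabla\cdot\left[(u\cdot\nabla)\omega\right]$ has nothing to cancel against and cannot be rewritten as an operator acting on $\chi$. Second, and more seriously, the diffusion coefficient $\mu/\rho$ is not constant: one has $\partial_{k}\partial_{j}\bigl(\tfrac{\mu}{\rho}\partial_{j}\omega^{k}\bigr)=\partial_{j}\bigl(\tfrac{\mu}{\rho}\partial_{j}\chi\bigr)+\mu\,\partial_{j}\bigl[(\partial_{k}\rho^{-1})\,\partial_{j}\omega^{k}\bigr]$, and the second term involves the full gradient of $\omega$, not $\chi$. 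In the unreduced vorticity equation (\ref{om-1}) this residue is cancelled precisely by the term $-\mu\,\partial_{j}\rho^{-1}\,\nabla\omega^{j}$, which is one of the terms discarded in passing to (\ref{AA-3}); after the reduction the cancellation is lost. So your parabolic-uniqueness argument for $\chi$ only goes through if one additionally assumes $\rho$ spatially constant in the diffusion term, or restores the discarded term. Note finally that for $d=2$ --- the case actually used in the paper's numerics, with boundary condition (\ref{B-c-1}) --- the whole issue evaporates: $\omega$ is scalar, the scalar curl of (\ref{AA-5}) gives $\Delta(\omega-\nabla\wedge u)=0$ directly, and the conclusion follows exactly as in your first step.
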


As we have mentioned before, for 2-dimensional flows, $(\omega\cdot\nabla)u$ appearing in (\ref{AA-3}) equals zero and the boundary
conditions in (\ref{B-c}) can be simplified: 
\begin{equation}
u=0,\quad\phi=\nabla\cdot u,\quad\omega=\frac{\partial}{\partial x^{1}}u^{2}-\frac{\partial}{\partial x^{2}}u^{1}\quad\textrm{ in }\partial D\label{B-c-1}
\end{equation}

\section{An integral representations for parabolic equations}

The random-vortex method for the model (\ref{AA-c}, \ref{AA-2}, \ref{AA-3}, \ref{AA-4}, \ref{AA-5}) is derived by using
a new representation theorem for solutions to some parabolic equations which will be established in this section.

To this end we need a few facts about the diffusion process with with the infinitesimal generator $\mathscr{L}_{a;b}=\mathscr{L}_{a;b,0}$,
where 
\[
\mathscr{L}_{a;b,c}=\sum_{i,j=1}^{d}\frac{\partial}{\partial x^{j}}a^{i,j}(x,t)\frac{\partial}{\partial x^{i}}+\sum_{i=1}^{d}b^{i}(x,t)\frac{\partial}{\partial x^{i}}+c(x,t)
\]
where $a(x,t)=(a^{ij}(x,t))_{i,j\leq d}$ is a smooth $d\times d$ symmetric matrix-valued function on $\mathbb{R}^{d}\times[0,\infty)$
with bounded derivatives, satisfying the uniformly elliptic condition. $b(x,t)=(b^{1}(x,t),\cdots,b^{d}(x,t))$ is a smooth
(time-dependent) vector field, and $c(x,t)$ is a continuous bounded scalar function. Let $\mathbb{P}_{a;b}^{\xi,s}$ be the
diffusion measure with the infinitesimal generator $\mathscr{L}_{a;b}$ started from $\xi\in\mathbb{R}^{d}$ at instance
$s\geq0$, and $\mathbb{P}_{a;b}^{\xi,0\rightarrow\eta,T}$ denotes the conditional law $\mathbb{P}_{a;b}^{\xi,0}\left[\cdot\right]$
conditional on that the diffusion stopped at the location $\eta\in\mathbb{R}^{d}$ at time $T>0$, for a precise definition
cf. \citep{Dellacherie and Meyer Volume D,Z.Qian and Z.Shen}.

Let $\Sigma_{a;b,c}$ denotes the associated fundamental solution to operator $\mathscr{L}_{a;b,c}-\partial_{t}$ and $\Sigma_{a;b,c}^{*}$
to operator $\mathscr{L}_{a;b,c}+\partial_{t}$, and $p_{a,b}(s,x;t,y)$ denotes the transition probability density function
of the diffusion with generator $\mathscr{L}_{a;b}=\mathscr{L}_{a;b,0}$. It is known that $p_{a,b}=\Sigma_{a;b}^{*}$, cf.
\citep{Friedman 1964,Stroock and Varadhan 1979}.

The following duality of conditional diffusion laws has been established in \citep{Z.Qian 2022,Z.Qian and Z.Shen}. 
\begin{thm}
\label{thm3.1}Assume that $\nabla\cdot b$ is bounded. Let $T>0$ and $\xi,\eta\in\mathbb{R}^{d}$. Let $\tau_{T}$ denote
the time-reversal which maps a path $w(\cdot)$ to $w(T-\cdot)$. Then

1) $\mathbb{P}_{a^{T};-b^{T}}^{\eta,0\rightarrow\xi,T}\circ\tau_{T}$ and $\mathbb{P}_{a;b}^{\eta,0\rightarrow\xi,T}\circ\tau_{T}$
are absolutely continuous with each other, and 
\begin{equation}
\frac{\mathrm{d}\mathbb{P}_{a^{T};-b^{T}}^{\eta,0\rightarrow\xi,T}\circ\tau_{T}}{\mathrm{d}\mathbb{P}_{a;b}^{\xi,0\rightarrow\eta,T}}=\frac{\textrm{e}^{\int_{0}^{T}\nabla\cdot b(X_{s},s)\textrm{d}s}}{\mathbb{P}_{a;b}^{\xi,0\rightarrow\eta,T}\left[\textrm{e}^{\int_{0}^{T}\nabla\cdot b(X_{s},s)\textrm{d}s}\right]}\label{eq:Dual1}
\end{equation}
where $f^{T}(x,t)=f(x,T-t)$ for $t\leq T$ and $f^{T}(x,t)=f(x,0)$ for $t\geq T$, and $X=(X_{t})_{t\geq0}$ donates the
coordinate process.

2) It holds 
\begin{equation}
\frac{\varSigma_{a;b,c}^{\star}(\xi,\tau;x,t)}{p_{a;b}(\tau,\xi;t,x)}=\mathbb{P}_{a;b}^{\xi,\tau\rightarrow x,t}\left[\textrm{e}^{\int_{\tau}^{t}c(X_{s},s)\textrm{d}s}\right]\label{eq:Dual2}
\end{equation}
for $0\leq\tau<t\leq T$, $\xi,x\in\mathbb{R}^{d}$. 
\end{thm}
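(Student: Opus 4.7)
The plan is to handle the two parts of the theorem separately, doing part 2) first because it provides the Feynman–Kac tool needed to identify the exponential weight in part 1).

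For part 2), I would introduce, for a smooth test function $f$, the quantity $V(\xi,\tau):=\mathbb{E}_{a;b}^{\xi,\tau}\bigl[f(X_{t})\exp(\int_{\tau}^{t}c(X_{s},s)\,\mathrm{d}s)\bigr]$. Applying It\^o's formula to $s\mapsto\exp(\int_{\tau}^{s}c(X_{r},r)\,\mathrm{d}r)\,W(X_{s},s)$ for a smooth candidate $W$, and using the uniform ellipticity of $a$ to justify the required regularity, I would show that $V$ solves the backward Cauchy problem $(\partial_{\tau}+\mathscr{L}_{a;b,c;\xi})V=0$ on $\tau<t$ with terminal datum $V(\cdot,t)=f$. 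Uniqueness (for which the boundedness hypotheses on $b,c$ and the bounded derivatives of $a$ are used) then gives $V(\xi,\tau)=\int\varSigma_{a;b,c}^{\star}(\xi,\tau;x,t)f(x)\,\mathrm{d}x$. On the other hand, conditioning on the endpoint yields $V(\xi,\tau)=\int p_{a;b}(\tau,\xi;t,x)f(x)\,\mathbb{P}_{a;b}^{\xi,\tau\to x,t}\bigl[e^{\int_{\tau}^{t}c(X_{s},s)\mathrm{d}s}\bigr]\mathrm{d}x$. Since $f$ is arbitrary, comparison of the integrands yields formula (\ref{eq:Dual2}).

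For part 1), I would view both bridge measures as Doob $h$-transforms: $\mathbb{P}_{a;b}^{\xi,0\to\eta,T}$ is the $h$-transform of $\mathbb{P}_{a;b}^{\xi,0}$ by $h(s,x)=p_{a;b}(s,x;T,\eta)$, and similarly for $\mathbb{P}_{a^{T};-b^{T}}^{\eta,0\to\xi,T}$. Next I would apply the classical Nelson–Haussmann–Pardoux time-reversal formula to the (unconditioned) diffusion $\mathbb{P}_{a;b}^{\xi,0}$ on $[0,T]$; the reversed process has the same diffusion matrix $a^{T}$ but a drift of the form $-b^{T}+a^{T}\nabla\log q_{T-\cdot}$, where $q_{s}$ is the marginal density at time $s$. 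Composing with the endpoint-conditioning (i.e.\ passing to the bridge) converts the $\nabla\log q$ correction into a $\nabla\log p_{a;b}(\cdot;T,\eta)$ correction, so the reversed bridge is itself a Markov process with an explicit generator.

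The decisive computation is to compare this true reversed-bridge generator with that of $\mathbb{P}_{a^{T};-b^{T}}^{\eta,0\to\xi,T}$. Because $a$ is symmetric and the first-order part of $\mathscr{L}_{a;b}$ is written in divergence form, integration by parts gives the formal spatial adjoint $\mathscr{L}_{a;b}^{*}=\mathscr{L}_{a;-b,-\nabla\cdot b}$, so the two generators agree except for the zeroth-order term $-\nabla\cdot b$. By the Feynman–Kac identity proved in part 2), incorporating this potential is equivalent to reweighting the path measure by $\exp(\int_{0}^{T}\nabla\cdot b(X_{s},s)\,\mathrm{d}s)$; demanding that the reweighted measure be a probability measure fixes the denominator in (\ref{eq:Dual1}) as the indicated conditional expectation.

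The main obstacle is the justification of the time-reversal formula under the bridge conditioning. Nelson's drift formula requires smoothness and strict positivity of the marginal density of the \emph{unconditioned} diffusion, and one must verify that the $h$-transform preserves enough regularity so that the reversal can be carried out rigorously; a related technical issue is the unboundedness of $\nabla\log p_{a;b}(s,\cdot;T,\eta)$ near $s=T$, which forces one to prove the identity first on $[0,T-\varepsilon]$ and then pass to the limit $\varepsilon\to0$, using Gaussian bounds on $p_{a;b}$ and on its spatial gradient (as in Friedman or Stroock–Varadhan) to control the exponential weight uniformly in $\varepsilon$.
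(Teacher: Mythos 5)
The paper does not actually prove Theorem \ref{thm3.1}; it is quoted from the cited references (Qian et al.\ 2022 and Qian--Shen 2023), so your attempt has to be measured against the proof given there. Your part 2) coincides with the standard argument used in those references: define $V(\xi,\tau)=\mathbb{E}_{a;b}^{\xi,\tau}[f(X_t)\mathrm{e}^{\int_\tau^t c}]$, identify it via Feynman--Kac and uniqueness with $\int\varSigma^{\star}_{a;b,c}(\xi,\tau;x,t)f(x)\,\mathrm{d}x$, and disintegrate over the endpoint; that is correct. For part 1), however, you take a genuinely different and heavier route (Doob $h$-transforms plus the Nelson--Haussmann--Pardoux reversal formula), whereas the references compute the finite-dimensional distributions of both bridges directly: the divergence-form adjoint identity $\mathscr{L}_{a;b}^{*}=\mathscr{L}_{a;-b,-\nabla\cdot b}$ gives $p_{a^{T};-b^{T}}(T-t,y;T-s,z)=\varSigma^{\star}_{a;b,\nabla\cdot b}(z,s;y,t)$, part 2) converts this into $p_{a;b}(s,z;t,y)\,\mathbb{P}_{a;b}^{z,s\to y,t}[\mathrm{e}^{\int_s^t\nabla\cdot b}]$, and the product over consecutive time points telescopes into (\ref{eq:Dual1}). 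That elementary route buys you exactly what you flag as your main obstacle: it never touches $\nabla\log p_{a;b}(s,\cdot\,;T,\eta)$, so there is no singularity at $s=T$ to regularize and no need for gradient bounds on the heat kernel.

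One step of your part 1) should be repaired before you could call the argument complete: you say the reversed-bridge generator and the generator of the $\mathbb{P}_{a^{T};-b^{T}}$-bridge ``agree except for the zeroth-order term $-\nabla\cdot b$.'' Taken literally this cannot happen --- both are generators of probability-preserving Markov processes, so their difference is necessarily a pure drift (a first-order term), not a potential. The object that genuinely carries the zeroth-order term $-\nabla\cdot b$ is the adjoint \emph{operator} governing the reversed transition density, i.e.\ the comparison is between the unnormalized kernels $\varSigma^{\star}_{a;-b,-\nabla\cdot b}$ and $p_{a;-b}$, not between Markov generators; equivalently, at the level of path measures the drift discrepancy is of gradient type $2a^{T}\nabla\log\Phi$ for a gauge function $\Phi$ solving a backward equation with potential $\nabla\cdot b^{T}$, and only after an It\^{o} integration by parts does the Girsanov exponential collapse to $\mathrm{e}^{\int_0^T\nabla\cdot b}$ up to boundary terms absorbed by the bridge normalization. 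Once you rephrase the ``decisive computation'' in those terms you are, in effect, reproducing the finite-dimensional-distribution proof, which suggests carrying that computation out directly from the start.
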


We are now in a position to establish the integral representation to the solutions of the following initial and boundary
problem to the following (linear) parabolic system 
\begin{equation}
\left(\mathscr{L}_{a;b,c}-\partial_{t}\right)\varPsi^{i}+q_{j}^{i}\varPsi^{j}+f^{i}=0\quad\mathrm{in}\;D\times(0,\infty),\quad\varPsi(\cdot,0)=\varPsi_{0},\label{01-t}
\end{equation}
(where $D\subset\mathbb{R}^{d}$) subject to the boundary value $\varPsi^{i}(x,t)=0$ for $x\in\partial D$ and $t>0$, where
$i=1,\ldots,d$. The multiplier $q(x,t)=(q^{ij}(x,t))$ is an $n\times n$ square-matrix valued bounded and continuous function.

For simplicity we extend the definition $q(x,t)$ to all $x\in\mathbb{R}^{d}$ with the convention that $q(x,t)=0$ for $x\in D^{c}$.

Here and thereafter Einstein's convention that repeated indices are summed up is applied. 
\begin{thm}
\label{thm3.2} For $\psi\in C([0,T];\mathbb{R}^{d})$, $A(\psi,T;t)$ denotes the solution to the following system 
\begin{equation}
\frac{\textrm{d}}{\textrm{d}t}A_{j}^{i}(\psi,T;t)=-A_{k}^{i}(\psi,T;t)q_{j}^{k}(\psi(t),t)-A_{j}^{i}(\psi,T;t)c(\psi(t),t)\label{A-1}
\end{equation}
such that $A_{j}^{i}(\psi,T;T)=\delta_{ij}$ for $i,j\leq n$. Then 
\begin{align*}
\varPsi(\xi,T) & =\int_{D}\mathbb{E}\left[\left.1_{\{T<\zeta(X^{\eta})\}}\textrm{e}^{-\int_{0}^{T}\nabla\cdot b(X_{s},s)\textrm{d}s}A(X,T;0)\varPsi_{0}(\eta)\right|X_{T}^{\eta}=\xi\right]p_{a;-b}(0,\eta;T,\xi)\textrm{d}\eta\\
 & +\int_{0}^{T}\int_{D}\mathbb{E}\left[\left.1_{\{t>\gamma_{T}(X^{\eta})\}}\textrm{e}^{-\int_{0}^{T}\nabla\cdot b(X_{s},s)\textrm{d}s}A(X,T;t)f(X_{t},t)\right|X_{T}^{\eta}=\xi\right]p_{a;-b}(0,\eta;T,\xi)\textrm{d}\eta\textrm{d}t
\end{align*}
for any $T>0$ and $\xi\in D$, where $X^{\eta}$ is a $\mathscr{L}_{a,-b}$-diffusion started from $\eta$ at
time $0$. Here $(A\psi)^{i}=A_{j}^{i}\psi^{j}$, $\zeta(w)=\inf\left\{ t\geq0:w(t)\notin D\right\} $ and $\gamma_{T}(w)=\sup\left\{ s\in(0,T):w(s)\in\partial D\right\} $. 
\end{thm}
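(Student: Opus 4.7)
The plan is to first reduce the forward initial-boundary value problem to a backward one, apply a standard matrix-valued Feynman-Kac identity to an auxiliary diffusion, and then use Theorem~\ref{thm3.1} to rewrite the resulting representation in terms of the diffusion $X^{\eta}$. Set $\Phi(x,t):=\Psi(x,T-t)$ on $D\times[0,T]$; it satisfies the backward system $-\partial_{t}\Phi^{i}=\mathscr{L}_{a^{T};b^{T},c^{T}}\Phi^{i}+(q^{T})^{i}_{\,j}\Phi^{j}+(f^{T})^{i}$ with terminal datum $\Phi(\cdot,T)=\Psi_{0}$ and boundary datum $\Phi|_{\partial D}=0$. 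Let $Y$ be the $\mathscr{L}_{a^{T};b^{T}}$-diffusion started at $\xi$ at time $0$, let $\tau_{Y}:=\inf\{s:Y_{s}\notin D\}$, and let $B(s)$ be the matrix-valued solution along $Y$ of $\tfrac{\textrm{d}}{\textrm{d}s}B(s)=B(s)[q^{T}(Y_{s},s)+c^{T}(Y_{s},s)I]$ with $B(0)=I$. Applying It\^o's formula to $B^{i}_{\,j}(s)\Phi^{j}(Y_{s},s)$ and using the backward PDE, the absolutely-continuous part of $\textrm{d}(B\Phi)^{i}$ collapses to $-B^{i}_{\,j}(f^{T})^{j}\,\textrm{d}s$; stopping at $T\wedge\tau_{Y}$, using $\Phi|_{\partial D}=0$, and taking expectations gives
\begin{equation*}
\Psi(\xi,T)=\mathbb{E}\bigl[1_{\{\tau_{Y}>T\}}B(T)\Psi_{0}(Y_{T})\bigr]+\int_{0}^{T}\mathbb{E}\bigl[1_{\{\tau_{Y}>s\}}B(s)f(Y_{s},T-s)\bigr]\,\textrm{d}s.
\end{equation*}

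Next I would disintegrate these expectations over $Y_{T}=\eta$ (and over $Y_{s}=y$ for the source integral) and invoke Theorem~\ref{thm3.1}(1), applied with $b$ replaced by $-b$ and with the roles of $\xi$ and $\eta$ swapped. This identifies the bridge law $\mathbb{P}_{a^{T};b^{T}}^{\xi,0\rightarrow\eta,T}\circ\tau_{T}$ with $\mathbb{P}_{a;-b}^{\eta,0\rightarrow\xi,T}$ via the Radon--Nikodym density $\textrm{e}^{-\int_{0}^{T}\nabla\cdot b(X_{s},s)\,\textrm{d}s}/Z$, where $Z:=\mathbb{P}_{a;-b}^{\eta,0\rightarrow\xi,T}[\textrm{e}^{-\int_{0}^{T}\nabla\cdot b(X_{s},s)\,\textrm{d}s}]$. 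Under the resulting identification $Y_{s}=X_{T-s}^{\eta}$ (with $X^{\eta}$ conditioned to hit $\xi$ at time $T$), the change of variable $t=T-s$ shows that the time-reversed matrix $\hat B(t):=B(T-t)$ satisfies the backward ODE (\ref{A-1}) along $X^{\eta}$ with terminal value $I$ at $t=T$; hence $\hat B(t)=A(X^{\eta},T;t)$, and in particular $B(T)=A(X^{\eta},T;0)$. The event $\{\tau_{Y}>T\}$ becomes $\{\zeta(X^{\eta})>T\}$, while $\{\tau_{Y}>T-t\}$, the event that $Y$ stays inside $D$ throughout $[0,T-t]$, becomes the event that $X^{\eta}$ avoids $\partial D$ on $(t,T)$, i.e., $\{t>\gamma_{T}(X^{\eta})\}$.

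Finally the normalization constant $Z$ is absorbed into the density factor. By Theorem~\ref{thm3.1}(2) applied with coefficients $(a,-b)$ and zero-order term $c=-\nabla\cdot b$, one has $Z=\varSigma_{a;-b,-\nabla\cdot b}^{\star}(\eta,0;\xi,T)/p_{a;-b}(0,\eta;T,\xi)$. Since $\mathscr{L}_{a;-b,-\nabla\cdot b}$ is the formal $L^{2}$-adjoint of $\mathscr{L}_{a;b}$, a standard duality identification between adjoint fundamental solutions and time-reversed transition densities yields $\varSigma_{a;-b,-\nabla\cdot b}^{\star}(\eta,0;\xi,T)=p_{a^{T};b^{T}}(0,\xi;T,\eta)$. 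Consequently $p_{a^{T};b^{T}}(0,\xi;T,\eta)/Z=p_{a;-b}(0,\eta;T,\xi)$, which is precisely the density weight appearing in the theorem; substituting back produces the claimed formula. The main technical obstacle will be this normalization identity together with the careful alignment of bridge laws, Radon--Nikodym weights, and exit-time indicators under time reversal; once these are in place, the remainder of the argument is routine bookkeeping of the matrix ODE and the change of time variable.
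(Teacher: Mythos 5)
Your proposal is correct and follows essentially the same route as the paper's own proof: the auxiliary $\mathscr{L}_{a^{T};b^{T}}$-diffusion started at $\xi$ (the paper's $\tilde{X}^{\xi}$), the forward matrix ODE for $B$ (the paper's $\tilde{A}$), the It\^o/optional-stopping step killed at the exit time, and then the combined use of Theorem~\ref{thm3.1}(1) for the bridge-law reversal and Theorem~\ref{thm3.1}(2) to cancel the normalization $Z$ against the density factor are all exactly the steps the paper carries out, with $A(X,T;t)=B(T-t)$ matching the paper's definition $A(w,T;s)=\tilde{A}(w\circ\tau_{T},T-s)$.
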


\begin{proof}
Let $T>0$ be arbitrary but fixed in the discussion below. For each $T>0$, let $\tilde{X}^{\xi}$ be the solution of the
following stochastic differential equation: 
\[
\mathrm{d}(\tilde{X}_{t}^{\xi})^{i}=(b^{i}+\nabla\cdot a^{i})(\tilde{X}_{t}^{\xi},T-t)\mathrm{d}t+\sum_{k=1}^{d}\sqrt{2}\sigma_{k}^{i}(\tilde{X}_{t}^{\xi},T-t)\mathrm{d}B_{t}^{k},\quad\tilde{X}_{0}^{\xi}=\xi
\]
for every $\xi\in\mathbb{R}^{d}$ and $B$ is a standard Brownian motion in $\mathbb{R}^{d}$ on some probability space,
$b(x,t)=0$ for $t<0$, and $a^{ij}(x,t)=\sigma_{k}^{i}(x,t)\sigma_{j}^{k}(x,t).$ Define $\tilde{A}(t)=\tilde{A}(w,t)$
by solving the differential equations 
\[
\mathrm{d}\tilde{A}_{j}^{i}(t)=\tilde{A}_{k}^{i}(t)(q_{j}^{k}+c^{i})(w(t),T-t)\mathrm{d}t,\quad\tilde{A}_{j}^{i}(0)=\delta_{ij}.
\]
Let $Y_{t}=\varPsi(\tilde{X}_{t\wedge T_{\xi}}^{\xi},T-t)=1_{\{t<T_{\xi}\}}\varPsi(\tilde{X}_{t}^{\xi},T-t)$, where $T_{\xi}=\mathrm{inf}\left\{ t:\tilde{X}_{t}^{\xi}\notin D\right\} $.
Let $M^{i}=\tilde{A}_{j}^{i}Y^{j}.$ Using Itô's formula we obtain that 
\[
M_{t}^{i}=Y_{0}^{i}+\sqrt{2}\sum_{i,k=1}^{d}\int_{0}^{t}1_{\{s<T_{\xi}\}}\frac{\partial\varPsi^{j}(\tilde{X}_{s}^{\xi},T-s)}{\partial x^{i}}\sigma_{k}^{i}(\tilde{X}_{s}^{\xi},T-s)\mathrm{d}B_{s}^{k}-\int_{0}^{t}1_{\{s<T_{\xi}\}}\tilde{A}_{j}^{i}(s)f^{j}(\tilde{X}_{s}^{\xi},T-s)\mathrm{d}s,
\]
where $\tilde{A}(s)=\tilde{A}(\tilde{X},s)$. Taking expectation on both sides, since $\varPsi$ vanishes along boundary
$\partial D$, we have 
\begin{align*}
\varPsi^{i}(\xi,T) & =\mathbb{E}\left[\tilde{A}_{j}^{i}(T)\varPsi_{0}^{j}(\tilde{X}_{T})1_{\{T<T_{\xi}\}}\right]+\int_{0}^{T}\mathbb{E}\left[\tilde{A}_{j}^{i}(s)f^{j}(\tilde{X}_{s}^{\xi},T-s)\right]\textrm{d}s\\
 & \equiv J_{1}^{i}+J_{2}^{i}.
\end{align*}
By conditional expectation, we may write 
\begin{align*}
J_{1}^{i}(\xi,T) & =\int_{D}\mathbb{E}\left[\left.\tilde{A}_{j}^{i}(T)1_{\{T<T_{\xi}\}}\right|\tilde{X}_{T}^{\xi}=\eta\right]\varPsi_{0}^{j}(\eta)p_{a^{T},b^{T}}(0,\xi,T,\eta)\textrm{d}\eta,
\end{align*}
where $p_{a^{T},b^{T}}(\tau,\xi;T,\eta)$ is the transition probability density function of the diffusion $\tilde{X}^{\xi}$.
Similarly 
\[
J_{2}^{i}(\xi,T)=\int_{0}^{T}\int_{\mathbb{\mathbb{R}}^{d}}\mathbb{E}\left[\left.\tilde{A}_{j}^{i}(t)f^{j}(\tilde{X}_{t}^{\xi},T-t)1_{\{t<T_{\xi}\}}\right|\tilde{X}_{T}^{\xi}=\eta\right]p_{a^{T},b^{T}}(0,\xi;T,\eta)\mathrm{d}\eta\textrm{d}t.
\]
On the other hand 
\begin{align*}
p_{a^{T};b^{T}}(0,\xi;T,\eta) & =\varSigma_{a;-b,-\nabla\cdot b}^{\star}(\eta,0;\xi,T)\\
 & =p_{a;-b}(0,\eta;T,\xi)\mathbb{P}_{a;-b}^{\eta,0\rightarrow\xi,T}\left[\textrm{e}^{-\int_{0}^{T}\nabla\cdot b(X_{s},s)\textrm{d}s}\right]
\end{align*}
where the second equality follows from (\ref{eq:Dual2}). By substituting this into the previous representation we obtain
\begin{align*}
J_{1}^{i}(\xi,T) & =\int_{D}\mathbb{E}\left[\left.\tilde{A}_{j}^{i}(T)1_{\{t<T_{\xi}\}}\right|\tilde{X}_{T}^{\xi}=\eta\right]\varPsi_{0}^{j}(\eta)p_{a;-b}(0,\eta;T,\xi)\mathbb{P}_{a;-b}^{\eta,0\rightarrow\xi,T}\left[\textrm{e}^{-\int_{0}^{T}\nabla\cdot b(X_{s},s)\textrm{d}s}\right]\textrm{d}\eta\\
 & =\int_{D}\mathbb{P}_{a^{T},b^{T}}^{\xi,0\rightarrow\eta,T}\left[\tilde{A}_{j}^{i}(T)1_{\{t<T_{\xi}\}}\right]\varPsi_{0}^{j}(\eta)p_{a;-b}(0,\eta;T,\xi)\mathbb{P}_{a;-b}^{\eta,0\rightarrow\xi,T}\left[\textrm{e}^{-\int_{0}^{T}\nabla\cdot b(X_{s},s)\textrm{d}s}\right]\textrm{d}\eta
\end{align*}
and 
\begin{align*}
J_{2}^{i}(\xi,T) & =\int_{0}^{T}\int_{D}\mathbb{E}\left[\left.\tilde{A}_{j}^{i}(t)f^{j}(\tilde{X}_{t}^{\xi},T-t)1_{\{t<T_{\xi}\}}\right|\tilde{X}_{T}^{\xi}=\eta\right]p_{a;-b}(0,\eta;T,\xi)\\
 & \ \ \ \times\mathbb{P}_{a;-b}^{\eta,0\rightarrow\xi,T}\left[\textrm{e}^{-\int_{0}^{T}\nabla\cdot b(X_{s},s)\textrm{d}s}\right]\mathrm{d}\eta\textrm{d}t\\
 & =\int_{0}^{T}\int_{D}\mathbb{P}_{a^{T},b^{T}}^{\xi,0\rightarrow\eta,T}\left[\tilde{A}_{j}^{i}(t)f^{j}(\tilde{X}_{t}^{\xi},T-t)1_{\{t<T_{\xi}\}}\right]p_{a;-b}(0,\eta;T,\xi)\\
 & \ \ \ \times\mathbb{P}_{a;-b}^{\eta,0\rightarrow\xi,T}\left[\textrm{e}^{-\int_{0}^{T}\nabla\cdot b(X_{s},s)\textrm{d}s}\right]\mathrm{d}\eta\textrm{d}t
\end{align*}
By the conditional law duality, Theorem \ref{thm3.1} again, where $X$ is the coordinate process, we therefore conclude
that 
\[
J_{1}^{i}(\xi,T)=\int_{D}\mathbb{P}_{a;-b}^{\eta\rightarrow\xi}\left[\textrm{e}^{-\int_{0}^{T}\nabla\cdot b(w(s),s)\textrm{d}s}1_{\{T<\zeta(w\circ\tau_{T})\}}\tilde{A}_{j}^{i}(w\circ\tau_{T},T)\right]\varPsi^{j}(\eta,0)p_{a;-b}(0,\eta;T,\xi)\textrm{d}\eta
\]
and 
\begin{align*}
J_{2}^{i}(\xi,T) & =\int_{0}^{T}\int_{D}\mathbb{P}_{a;-b}^{\eta\rightarrow\xi}\left[\textrm{e}^{-\int_{0}^{T}\nabla\cdot b(w(s),s)\textrm{d}s}1_{\{t<\zeta(w\circ\tau_{T})\}}\tilde{A}_{j}^{i}(w\circ\tau_{T},t)f^{j}(w(T-t),T-t)\right]\\
 & \ \times p_{a;-b}(0,\eta;T,\xi)\mathrm{d}\eta\textrm{d}t
\end{align*}
Define $A_{j}^{i}(w,T;s)=A_{j}^{i}(w\circ\tau_{T},T-s)$. Then 
\[
J_{1}^{i}(\xi,T)=\int_{D}\mathbb{P}_{a;-b}^{\eta\rightarrow\xi}\left[\textrm{e}^{-\int_{0}^{T}\nabla\cdot b(X_{s},s)\textrm{d}s}1_{\{T<\zeta(w\circ\tau_{T})\}}A_{j}^{i}(\psi,T;0)\right]\varPsi^{j}(\eta,0)p_{a;-b}(0,\eta;T,\xi)\textrm{d}\eta
\]
and 
\begin{align*}
J_{2}^{i}(\xi,T) & =\int_{0}^{T}\int_{D}\mathbb{P}_{a;-b}^{\eta\rightarrow\xi}\left[\textrm{e}^{-\int_{0}^{T}\nabla\cdot b(w(s),s)\textrm{d}s}1_{\{t<\zeta(w\circ\tau_{T})\}}A_{j}^{i}(\psi,T;T-t)f^{j}(\psi(T-t),T-t)\right]\\
 & \ \times p_{a;-b}(0,\eta;T,\xi)\mathrm{d}\eta\textrm{d}t\\
 & =\int_{0}^{T}\int_{D}\mathbb{P}_{a;-b}^{\eta\rightarrow\xi}\left[\textrm{e}^{-\int_{0}^{T}\nabla\cdot b(w(s),s)\textrm{d}s}1_{\{T-t<\zeta(w\circ\tau_{T})\}}A_{j}^{i}(\psi,T;t)f^{j}(\psi(t),t)\right]p_{a;-b}(0,\eta;T,\xi)\mathrm{d}\eta\textrm{d}t
\end{align*}
which imply the integral representation. 
\end{proof}

\section{Random vortex method for compressible flows }

In this section we describe a random vortex method for (compressible) viscous flows by using the Feynman type formula for
solutions of parabolic equations derived in Section 3. To this end we recall that the Green function in $\mathbb{R}^{d}$
is given as

\begin{equation}
\Gamma_{d}(y,x)=\left\{ \begin{aligned} & -\frac{1}{(d-2)s_{d-1}}\frac{1}{|y-x|^{d-2}},\quad & \textrm{ if }\,\,d>2,\\
 & \frac{1}{2\pi}\ln|y-x|,\quad & \textrm{ if }\,\,d=2,
\end{aligned}
\right.\label{Ga-01}
\end{equation}
where $s_{d-1}$ is the surface area of a unit sphere in $\mathbb{R}^{d}$. 

On $\mathbb{R}_{+}^{d}=\left\{ (x_{1},\cdots,x_{d})\in\mathbb{R}^{d}:x_{d}>0\right\} $, there are integral representations for vector fields and functions. Indeed the Green function on $\mathbb{R}_{+}^{d}$ is given by $G_{d}(y,x)=\Gamma_{d}(y,x)-\Gamma_{d}(y,\overline{x})$
for $x\neq y$, $y\neq\overline{x}$, where $\overline{x}=(x_{1},\cdots,x_{d},-x_{d})$ denotes the reflection about the
hyperplane $x_{d}=0$. The Biot-Savart kernel on $\mathbb{R}_{+}^{d}$ is $\Lambda_{d}(y,x)=\nabla_{y}G_{d}(y,x)$, and the
Biot-Savart law says that if $u$ is a vector field which is regular and decays to zero at infinity, then 
\begin{equation}
u(x)=\int_{\mathbb{R}_{+}^{d}}\Lambda_{d}(y,x)\wedge\omega(y)dy-\int_{\mathbb{R}_{+}^{d}}\phi(y)\Lambda_{d}(y,x)\textrm{d}y,\label{BS-e1-1}
\end{equation}
\begin{equation}
\theta(x)=-\int_{\mathbb{R}_{+}^{2}}\Lambda_{2}(y,x)\cdot\Theta(y)\mathrm{d}y+\frac{x_{2}}{\pi}\int_{-\infty}^{\infty}\frac{\theta_{b}(y_{1})}{|y_{1}-x_{1}|^{2}+x_{2}^{2}}\mathrm{d}y_{1}\label{Th-2D-Half}
\end{equation}
if $d=2$ and $x=(x_{1},x_{2})$ with $x_{2}>0$, and 
\begin{equation}
\theta(x)=-\int_{\mathbb{R}_{+}^{3}}\Lambda_{3}(y,x)\cdot\Theta(y)\mathrm{d}y+\frac{x_{3}}{2\pi}\int_{\{y_{d}=0\}}\frac{\theta_{b}(y_{1})}{|y_{1}-x_{1}|^{2}+|y_{2}-x_{2}|^{2}+x_{3}^{2}}\mathrm{d}y_{1}\mathrm{d}y_{2}\label{Th-3D_half}
\end{equation}
in the case where $d=3$ and $x=(x_{1},x_{2},x_{3})$ with $x_{3}>0$. Where $u$ is a vector field on $\mathbb{R}_{+}^{d}$, $\omega=\nabla\wedge u$ and $\phi=\nabla\cdot u$, and $\theta$ is a function on $\mathbb{R}_{+}^{d}$ and $\Theta=\nabla\theta$,
and $\theta_{b}$ is its boundary value.

We are now in a position to formulate a random vortex (and rate-expansion) dynamics for viscous flows corresponding to the
partial differential equations (\ref{AA-2}, \ref{AA-3}, \ref{AA-4}), which are composed of the following stochastic differential
equations (SDEs): 
\[
\textrm{d}X_{t}^{\eta}=\sqrt{2\mu\rho^{-1}(X_{t}^{\eta},t)}\mathrm{d}B_{t}^{X}+(\mu\nabla\rho^{-1}+u)(X_{t}^{\eta},t)\mathrm{d}t,\quad X_{0}^{\eta}=\eta,
\]
\[
\textrm{d}Y_{t}^{\eta}=\sqrt{2\kappa\rho_0^{-1}(Y_{t}^{\eta},t)}\mathrm{d}B_{t}^{Y}+(\kappa\nabla\rho_0^{-1}+u)(Y_{t}^{\eta},t)\mathrm{d}t,\quad Y_{0}^{\eta}=\eta
\]
and 
\[
\textrm{d}Z_{t}^{\eta}=\sqrt{2(\mu+\lambda)\rho^{-1}(Z_{t}^{\eta},t)}\mathrm{d}B_{t}^{Z}+((\mu+\lambda)\nabla\rho^{-1}+u)(Z_{t}^{\eta},t)\mathrm{d}t,\quad Z_{0}^{\eta}=\eta,
\]
where $u(x,t)$ and $\rho(x,t)$ are the flow velocity and the fluid density, $B^{X}$, $B^{Y}$ and $B^{Z}$ are independent
of $d$-dimensional Brownian motions on a probability space $(\varOmega,\mathcal{F},\mathbb{P})$.

Since in this work we consider only the fluid flows with small viscosity and small fluid gradient $\nabla\rho^{-1}$, and
further assume that $\rho_{0}$ is a constant, we instead consider the following simplified (approximate) system of SDEs:
\begin{equation}
\textrm{d}X_{t}^{\eta}=\sqrt{2\mu\rho^{-1}}\mathrm{d}B_{t}^{X}+u(X_{t}^{\eta},t)\mathrm{d}t,\quad X_{0}^{\eta}=\eta,\label{sde X-3}
\end{equation}

\begin{equation}
\textrm{d}Y_{t}^{\eta}=\sqrt{2\kappa\rho_0^{-1}}\mathrm{d}B_{t}^{Y}+u(Y_{t}^{\eta},t)\mathrm{d}t,\quad Y_{0}^{\eta}=\eta\label{sde Y-3}
\end{equation}
and

\begin{equation}
\textrm{d}Z_{t}^{\eta}=\sqrt{2(\mu+\lambda)\rho^{-1}}\mathrm{d}B_{t}^{Z}+u(Z_{t}^{\eta},t)\mathrm{d}t,\quad Z_{0}^{\eta}=\eta\label{sde Z-3}
\end{equation}
for every $\eta\in\mathbb{R}^{d}$. If $u(x,t)$ is considered as given in the previous system, then $X$, $Y$ and $Z$
are independent diffusions, and three SDEs are decoupled.

In general for a given $\tau\geq0$, we will use $X^{\eta,\tau}$, $Y^{\iota,\tau}$ and $Z^{\xi,\tau}$ to denote the diffusions
defined by SDEs 
\begin{equation}
\textrm{d}X_{t}^{\eta,\tau}=\sqrt{2\mu\rho^{-1}}\mathrm{d}B_{t}^{X}+u(X_{t}^{\eta,\tau},t)\mathrm{d}t,\quad X_{s}^{\eta,\tau}=\eta\quad\mathrm{for}\;s\leq\tau,\label{sde X-1-2}
\end{equation}

\begin{equation}
\textrm{d}Y_{t}^{\iota,\tau}=\sqrt{2\kappa\rho_0^{-1}}\mathrm{d}B_{t}^{Y}+u(Y_{t}^{\iota,\tau},t)\mathrm{d}t,\quad Y_{s}^{\iota,\tau}=\iota\quad\mathrm{for}\;s\leq\tau\label{sde Y-1-2}
\end{equation}
and

\begin{equation}
\textrm{d}Z_{t}^{\xi,\tau}=\sqrt{2(\mu+\lambda)\rho^{-1}}\mathrm{d}B_{t}^{Z}+u(Z_{t}^{\xi,\tau},t)\mathrm{d}t,\quad Z_{s}^{\xi,\tau}=\xi\quad\mathrm{for}\;s\leq\tau.\label{sde Z-1-2}
\end{equation}

Let $S$ be the Jacobian matrix of $u$, that is $S=\left(\frac{\partial}{\partial x_{j}}u^{i}\right)$
, and $\bar{S}$ is the transpose of $S$. According to Theorem \ref{thm3.2}, we then introduce $Q=\big(Q_{j}^{i}(\eta,t;s)\big)$
and $R=\big(R_{j}^{i}(\eta,t;s)\big)$ for $0\leq s\leq t$, for each pair $t$ and $\eta\in\mathbb{R}^{d}$, defined by
solving the following differential equations 
\begin{equation}
\frac{\textrm{d}}{\textrm{d}s}Q_{j}^{i}(\eta,t;s)=-Q_{k}^{i}(\eta,t;s)1_{D}(X_{s}^{\eta})S_{j}^{k}(X_{s}^{\eta},s),\quad Q_{j}^{i}(\eta,t;t)=\delta_{ij}\label{eq:dQ=00003D00003D00003DQS1_D-2}
\end{equation}
and 
\begin{equation}
\frac{\textrm{d}}{\textrm{d}s}R_{j}^{i}(\eta,t;s)=R_{k}^{i}(\eta,t;s)1_{D}(Z_{s}^{\eta})\bar{S}_{j}^{k}(Z_{s}^{\eta},s),\quad R_{j}^{i}(\eta,t;t)=\delta_{ij}\label{eq:dR=00003D00003D00003DRS^1_D-2}
\end{equation}
for $t>0$, where $i,j=1,\cdots,d$.

\section{Wall-bounded vortex system}

In this section, we consider the model (\ref{AA-c}, \ref{AA-2}, \ref{AA-3}, \ref{AA-4}, \ref{AA-5}) of Oberbeck-Boussinesq
flows with a heating source in the fluid in $\mathbb{R}_{+}^{d}$, $d=2$ or $3$. In \citep{Z Qian},
there's a discussion about incompressible Oberbeck-Boussinesq flows but without the consideration of variation of density and expansion
of fluid. Since the flow is driven by joint forces of gravity and buoyancy, the discussion of density changes in buoyancy
due to the temperature is inevitable. Thanks to the tool developed in \citep{Z.Qian and Z.Shen}, we are able to deal with
fluid with expansion and density variation. We derive the random vortex and expansion-rate formulation for (compressible)
flows past a solid wall. The underlying flow in $D=\mathbb{R}_{+}^{d}=\left\{ (\boldsymbol{x},x_{d})\in\mathbb{R}^{d}:x_{d}>0\right\} $
is modeled by the fluid dynamics ($d=2$ or $3$), whose boundary $\partial D$ where $x_{d}=0$ is considered as the solid
wall.

The introduction of Oberbeck-Boussinesq equations is presented in Section 2 (\ref{AA-c}-\ref{AA-5}), and the Feynman-Kac
type formula established in Section 3 allows us to work out the functional integral representations for vorticity, expansion
rate and temperature gradient. To simplify the notation, we use $p_{a}(s,x;t,y)$ to denote the transition probability density
function for the diffusion with infinitesimal generator $a\rho^{-1}\Delta+u\cdot\nabla$, where $a=\kappa$, $\mu$ or
$\mu+\lambda$, where $u(x,t)$ is the velocity of the flow in question.

Observe that $\Theta=\nabla\theta$ satisfies the parabolic type equation: 
\begin{equation}
\partial_{t}\Theta+(u\cdot\nabla)\Theta+\bar{S}\Theta=\frac{\kappa}{\rho_{0}}\Delta\Theta+\nabla g\label{AA_4_1}
\end{equation}
where  $(\bar{S}\Theta)^{j}=\bar{S}_{l}^{j}\Theta^{l}$. 

The flow is described by a time-independent vector field, the velocity $u(x,t)$ (for $x=(\boldsymbol{x},x_{d})$ where $x_{d}\geq0$),
satisfying the non-slip condition such that $u(x,t)=0$ for $x\in\partial D$ and $t>0$. The vector field $u(x,t)$ is extended to $\mathbb{R}^{d}$
through reflection method, so that 
\[
u^{i}(\bar{x},t)=u^{i}(x,t)\quad\mathrm{for}\ i=1,...,d-1\ \mathrm{and}\ u^{d}(\bar{x},t)=-u^{d}(x,t)
\]
where $\bar{x}=(\boldsymbol{x},-x_{d}).$ Using the same convention as in the previous sections, $p_{\mu}(\tau,\xi,t,x)$
denotes the transition probability density function of $\mathscr{L}_{\mu}=\mu\Delta+u(\cdot,t)\cdot\nabla$ on $\mathbb{R}^{d}$.
Then it can be verified that the transition probability density function of the $\mathscr{L}_{\mu}$-diffusion killed on
leaving the domain $D$ is given by 
\[
p_{\mu}^{D}(\tau,\xi,t,x)=p_{\mu}(\tau,\xi,t,x)-p_{\mu}(\tau,\bar{\xi},t,x)\quad\textrm{ for }\xi,x\in D.
\]
As a consequence we have 
\[
\mathbb{E}\left[\varphi(X_{t}^{\xi,\tau})1_{\{t<\zeta(X_{t}^{\xi,\tau})\}}\right]=\mathbb{E}\left[\varphi(X_{t}^{\xi,\tau})1_{D}(X_{t}^{\xi,\tau})\right]-\mathbb{E}\left[\varphi(X_{t}^{\bar{\xi},\tau})1_{D}(X_{t}^{\bar{\xi},\tau})\right]
\]
for any bounded Borel measurable $\varphi$. The method elaborated in \citep{Z.Qian 2022} has been used for the simulation
of Navier-Stokes equations with boundary.

We observe that the vorticity, temperature and the temperature gradient possess non-homogeneous Dirichlet boundary conditions, which does not satisfy the requirements of the representation theorem in Section 3. Thus, cut-off method is employed to address this issue. The boundary values of $\omega,\phi,\Theta$ at the wall $\partial D$, denoted by $\omega_{b}(\boldsymbol{x},t)$, $\phi_{b}(\boldsymbol{x},t)$,
$\Theta_{b}(\boldsymbol{x},t)$ respectively, can not be determined a priori in general, and are time-dependent. For simplicity
we use the following notation: if $f(x)$ is a tensor field on $D$, then for every $\varepsilon>0$, $f^{\varepsilon}(x,t)=f(x,t)-f_{b}(\boldsymbol{x},t)\psi(x_{d}/\varepsilon)$
for $x=(\boldsymbol{x},x_{d})\in D$, where $f_{b}(\boldsymbol{x},t)=f((\boldsymbol{x},0),t)$ is the trace of $f$ along
$\partial D$. Here $\psi:[0,\infty)\rightarrow[0,1]$ is a smooth cut-off function such that $\psi(s)=1_{\{0\leq s<1/3\}}+1_{\{1/3\leq s<2/3\}}h(s)$,
where $h(s)$ only need to guarantee the smooth of $\psi$.

By construction, $\lim_{\varepsilon\rightarrow0^{+}}\psi(x_{d}/\varepsilon)=1_{\{x_{d}=0\}}.$ In particular $f^{\varepsilon}$
satisfies the homogeneous Dirichlet boundary conditions.

For every $\varepsilon>0$, according to (\ref{AA-2}, \ref{AA-3} and \ref{AA-5}), 
\begin{equation}
\sum_{i,j}\frac{\partial}{\partial x^{j}}\left(\frac{\mu}{\rho}\delta_{ij}\frac{\partial}{\partial x^{i}}\omega^{\varepsilon}\right)-(u\cdot\nabla)\omega^{\varepsilon}-\phi\omega^{\varepsilon}-\partial_{t}\omega^{\varepsilon}+S\omega^{\varepsilon}+\nabla\wedge F(\theta)+\chi^{\varepsilon}=0,\label{omega mod}
\end{equation}
\begin{equation}
\sum_{i,j}\frac{\partial}{\partial x^{j}}\left(\frac{\mu+\lambda}{\rho}\delta_{ij}\frac{\partial}{\partial x^{i}}\phi^{\varepsilon}\right)-(u\cdot\nabla)\phi^{\varepsilon}-\partial_{t}\phi^{\varepsilon}+\vartheta^{\varepsilon}=0,\label{phi mod}
\end{equation}
and 
\begin{equation}
\frac{\kappa}{\rho_{0}}\Delta\Theta^{\varepsilon}-(u\cdot\nabla)\Theta^{\varepsilon}-\partial_{t}\Theta^{\varepsilon}-\bar{S}\Theta^{\varepsilon}+\nabla g+\alpha^{\varepsilon}=0\label{vartheta mod}
\end{equation}
where $\chi^{\varepsilon}$, $\vartheta^{\varepsilon}$ and $\alpha^{\varepsilon}$ are the error terms, which can be simplified
later on, given as the followings: 
\begin{align}
\chi^{\varepsilon}(x,t) & =\psi(\frac{x_{d}}{\varepsilon})\left[\sum_{i,j}\frac{\partial}{\partial x^{j}}\left(\frac{\mu}{\rho}\delta_{ij}\frac{\partial}{\partial x^{i}}\omega^{\varepsilon}\right)-\sum_{j=1}^{d-1}u^{j}\frac{\partial}{\partial x_{j}}-\phi-\frac{\partial}{\partial t}+S\right]\omega_{b}(\boldsymbol{x},t)\nonumber \\
 & -\frac{1}{\varepsilon}\psi^{\prime}(\frac{x_{d}}{\varepsilon})\omega_{b}(\boldsymbol{x},t)u^{d}(x,t)+\frac{\mu}{\rho}\frac{1}{\varepsilon^{2}}\psi^{\prime\prime}(\frac{x_{d}}{\varepsilon})\omega_{b}(\boldsymbol{x},t),\label{eq:chi}\\
\vartheta^{\varepsilon}(x,t) & =\psi(\frac{x_{d}}{\varepsilon})\left[\sum_{i,j}\frac{\partial}{\partial x^{j}}\left(\frac{\mu+\lambda}{\rho}\delta_{ij}\frac{\partial}{\partial x^{i}}\phi^{\varepsilon}\right)-\sum_{j=1}^{d-1}u^{j}\frac{\partial}{\partial x_{j}}-\frac{\partial}{\partial t}\right]\phi_{b}(\boldsymbol{x},t)\nonumber \\
 & -\frac{1}{\varepsilon}\psi^{\prime}(\frac{x_{d}}{\varepsilon})\phi_{b}(\boldsymbol{x},t)u^{d}(x,t)+\frac{\mu+\lambda}{\rho}\frac{1}{\varepsilon^{2}}\psi^{\prime\prime}(\frac{x_{d}}{\varepsilon})\phi_{b}(\boldsymbol{x},t), \label{eq:vartheta}\\
\alpha^{\varepsilon}(x,t) & =\psi(\frac{x_{d}}{\varepsilon})\left[\frac{\kappa}{\rho_0}\Delta-\sum_{j=1}^{d-1}u^{j}\frac{\partial}{\partial x_{j}}-\frac{\partial}{\partial t}-\bar{S}\right]\Theta_{b}(\boldsymbol{x},t)\nonumber \\
 & -\frac{1}{\varepsilon}\psi^{\prime}(\frac{x_{d}}{\varepsilon})\Theta_{b}(\boldsymbol{x},t)u^{d}(x,t)+\frac{\kappa}{\rho_{0}}\frac{1}{\varepsilon^{2}}\psi^{\prime\prime}(\frac{x_{d}}{\varepsilon})\Theta_{b}(\boldsymbol{x},t)\label{eq:alpha}
\end{align}
where $S=(S_{l}^{i})$ with $S_{l}^{i}=\frac{\partial u^{i}}{\partial x_{l}}$ and $\bar{S}$ is the transpose matrix. We
can see that for the first terms on the right hand side in each equation, by taking the limit of $\varepsilon$ to $0$,
$\psi(x_{d}/\varepsilon)$ goes to $1_{\{x_{d}=0\}}$. Hence we can eliminate the first terms for $\chi^{\varepsilon}$,
$\vartheta^{\varepsilon}$ and $\alpha^{\varepsilon}$ if we put them into a Lebesgue integration. What we need to do is
to take care of the terms involved with $-\varepsilon^{-1}\psi^{\prime}(x_{d}/\varepsilon)$ and $\varepsilon^{-2}\psi^{\prime\prime}(x_{d}/\varepsilon)$.

Obviously, $\omega^{\varepsilon},\phi^{\varepsilon},\Theta^{\varepsilon}$ all satisfy the homogeneous Dirichlet boundary conditions. Applying the integral representation Theorem \ref{thm3.2} to $\omega^{\varepsilon},\phi^{\varepsilon},\Theta^{\varepsilon}$,
we are able to establish the following representations. 
\begin{thm}
\label{prop:theta mod} For every $\varepsilon>0$, $\omega,\phi,\Theta$ have the following representations: 
\begin{align}
\Theta(\xi,t) & =\psi\left(\frac{\xi_{d}}{\varepsilon}\right)\Theta_{b}(\boldsymbol{\xi},t)+\int_{\mathbb{R}_{+}^{d}}\mathbb{E}\left[\left.1_{\{t<\zeta(Y^{\eta})\}} \textrm{e}^{\int_{0}^{t}\phi(Y_{r},r)\textrm{d}r} R(\eta,t;0)\Theta^{\varepsilon}(\eta,0)\right|Y_{t}^{\eta}=\xi\right]p_{\kappa}(0,\eta;t,\xi)\textrm{d}\eta\nonumber \\
 & +\int_{0}^{t}\int_{\mathbb{R}_{+}^{d}}\mathbb{E}\left[\left.1_{\{s>\gamma_{t}(Y^{\eta})\}}\textrm{e}^{\int_{0}^{t}\phi(Y_{r},r)\textrm{d}r}R(\eta,t;s)\left(\nabla g(Y_{s}^{\eta},s)+\alpha^{\varepsilon}(Y_{s}^{\eta},s)\right)\right|Y_{t}^{\eta}=\xi\right]\label{flower theta bounded}\\
 & \ \times p_{\kappa}(0,\eta;t,\xi)\textrm{d}\eta\textrm{d}s,\nonumber 
\end{align}
\begin{align}
\omega(\xi,t) & =\psi\left(\frac{\xi_{d}}{\varepsilon}\right)\omega_{b}(\boldsymbol{\xi},t)+\int_{\mathbb{R}_{+}^{d}}\mathbb{E}\left[\left.1_{\{t<\zeta(X^{\eta})\}}Q(\eta,t;0)\omega^{\varepsilon}(\eta,0)\right|X_{t}^{\eta}=\xi\right]p_{\mu}(0,\eta;t,\xi)\textrm{d}\eta\nonumber \\
 & +\int_{0}^{t}\int_{\mathbb{R}_{+}^{d}}\mathbb{E}\left[\left.1_{\{s>\gamma_{t}(X^{\eta})\}}\textrm{e}^{\int_{0}^{s}\phi(X_{r},r)\textrm{d}r}Q(\eta,t;s)\left(\nabla\wedge F(X_{s}^{\eta},s)+\chi^{\varepsilon}(X_{s}^{\eta},s)\right)\right|X_{t}^{\eta}=\xi\right]\label{omega bounded}\\
\  & \times p_{\mu}(0,\eta;t,\xi)\textrm{d}\eta\textrm{d}s,\nonumber 
\end{align}
and 
\begin{align}
\phi(\xi,t) & =\psi\left(\frac{\xi_{d}}{\varepsilon}\right)\phi_{b}(\boldsymbol{\xi},t)+\int_{\mathbb{R}_{+}^{d}}\mathbb{E}\left[\left.1_{\{t<\zeta(Z^{\eta})\}}\textrm{e}^{\int_{0}^{t}\phi(Z_{r},r)\textrm{d}r}\phi^{\varepsilon}(\eta,0)\right|Z_{t}^{\eta}=\xi\right]p_{\mu+\lambda}(0,\eta;t,\xi)\textrm{d}\eta\label{phi bounded}\\
 & +\int_{0}^{t}\int_{\mathbb{R}_{+}^{d}}\mathbb{E}\left[\left.1_{\{s>\gamma_{t}(Z^{\eta})\}}\textrm{e}^{\int_{0}^{t}\phi(Z_{r},r)\textrm{d}r}\vartheta^{\varepsilon}(Z_{s}^{\eta},s)\right|Z_{t}^{\eta}=\xi\right]p_{\mu+\lambda}(0,\eta;t,\xi)\textrm{d}\eta\textrm{d}s\nonumber 
\end{align}
for every $\xi=(\boldsymbol{\xi},\xi_{d})\in\mathbb{R}_{+}^{d},d=2$ or $3$ and $t>0$, where 
\[
\frac{\textrm{d}}{\textrm{d}t}R_{j}^{i}(\psi,T;t)=-R_{k}^{i}(\psi,T;t)1_{D}(Y_{s}^{\eta})\bar{S}_{j}^{k}(\psi(t),t)
\]
\[
R_{j}^{i}(\psi,T;T)=\delta_{ij}
\]
and 
\[
\frac{\textrm{d}}{\textrm{d}t}Q_{j}^{i}(\psi,T;t)=-Q_{k}^{i}(\psi,T;t)1_{D}(X_{s}^{\eta})S_{j}^{k}(\psi(t),t)
\]
and 
\[
Q_{j}^{i}(\psi,T;T)=\delta_{ij}.
\]
\end{thm}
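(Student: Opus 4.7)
The plan is to reduce each of $\omega$, $\phi$, $\Theta$ to a quantity with homogeneous Dirichlet data on $\partial D$ via the cut-off $f^{\varepsilon}(x,t)=f(x,t)-f_{b}(\boldsymbol{x},t)\psi(x_{d}/\varepsilon)$, and then to apply the Feynman--Kac type representation of Theorem \ref{thm3.2} directly to the resulting linear parabolic systems (\ref{omega mod}), (\ref{phi mod}), (\ref{vartheta mod}). All the nonhomogeneity has already been pushed into the source terms $\chi^{\varepsilon}$, $\vartheta^{\varepsilon}$, $\alpha^{\varepsilon}$, so the three stated identities will follow on adding back the cut-off pieces $\psi(\xi_{d}/\varepsilon)\omega_{b}$, $\psi(\xi_{d}/\varepsilon)\phi_{b}$, $\psi(\xi_{d}/\varepsilon)\Theta_{b}$.

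Concretely, I would first read off the coefficients in $(\mathscr{L}_{a;b,c}-\partial_{t})\Psi^{\varepsilon}+q\Psi^{\varepsilon}+f=0$: in every case $b=-u$, while for $\omega^{\varepsilon}$ one has $a=\mu\rho^{-1}I$, $c=-\phi$, $q=S$, $f=\nabla\wedge F+\chi^{\varepsilon}$; for $\phi^{\varepsilon}$ one has $a=(\mu+\lambda)\rho^{-1}I$, $c=0$, $q=0$, $f=\vartheta^{\varepsilon}$; and for $\Theta^{\varepsilon}$ one has $a=\kappa\rho_{0}^{-1}I$, $c=0$, $q=-\bar{S}$, $f=\nabla g+\alpha^{\varepsilon}$. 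With these choices the $\mathscr{L}_{a,-b}$-diffusion of Theorem \ref{thm3.2} is exactly (\ref{sde X-1-2}), (\ref{sde Z-1-2}) or (\ref{sde Y-1-2}), so the probabilistic ingredients match the random-vortex framework without further translation. Since $\nabla\cdot b=-\phi$ throughout, the weight $\exp(-\int_{0}^{T}\nabla\cdot b\,\mathrm{d}s)$ produced by the theorem becomes $\exp(\int_{0}^{T}\phi\,\mathrm{d}s)$, which is the ultimate source of the $\phi$-exponentials visible in the statement.

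The next step is to identify the multipliers $Q$ and $R$ of (\ref{eq:dQ=00003D00003D00003DQS1_D-2})--(\ref{eq:dR=00003D00003D00003DRS^1_D-2}) inside the Theorem 3.2 matrix $A$. I would factor $A(t)=Q(t)M(t)$ with the scalar $M$ solving $\mathrm{d}M/\mathrm{d}t=-cM$ and $M(T)=1$; this peels off the $c$-contribution and leaves $\mathrm{d}Q/\mathrm{d}t=-Qq$, which matches the ODEs satisfied by $Q$ and $R$ (with the indicator $1_{D}$ simply encoding the extension $q\equiv0$ on $D^{c}$). For $\omega^{\varepsilon}$ this gives $M(t)=\exp(-\int_{t}^{T}\phi(X_{s},s)\,\mathrm{d}s)$, which combines with the global $\exp(\int_{0}^{T}\phi\,\mathrm{d}s)$ to yield $\exp(\int_{0}^{s}\phi(X_{r},r)\,\mathrm{d}r)$ as in (\ref{omega bounded}); in the initial-data integral the cancellation $\exp(\int_{0}^{T}\phi)\cdot M(0)=1$ leaves only $Q(\eta,t;0)$. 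For $\phi^{\varepsilon}$ we have $c=q=0$, hence $A\equiv I$ and only the global $\phi$-exponential survives, matching (\ref{phi bounded}); for $\Theta^{\varepsilon}$, $c=0$ makes $M\equiv 1$ and the matrix ODE absorbs $-\bar{S}$ into $R$, yielding (\ref{flower theta bounded}).

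The main technical obstacle I anticipate is the sign bookkeeping in the factorisation $A=QM$, because $\phi$ enters (\ref{omega mod}) twice---once as the zeroth-order coefficient $c=-\phi$ and once through $\nabla\cdot b=-\phi$---and the two contributions must cancel completely in the initial-data integral but only partially in the source integral to produce the truncated exponential $\exp(\int_{0}^{s}\phi\,\mathrm{d}r)$. A secondary check is that the stopping-time indicators $\{T<\zeta\}$ and $\{t>\gamma_{T}\}$ supplied by Theorem \ref{thm3.2} correctly encode the killing of $\Psi^{\varepsilon}$ on $\partial D$, which is automatic once one confirms that $\omega^{\varepsilon}$, $\phi^{\varepsilon}$, $\Theta^{\varepsilon}$ indeed vanish along $\partial D$---a direct consequence of $\psi(0)=1$ together with the definition of the traces $\omega_{b}$, $\phi_{b}$, $\Theta_{b}$.
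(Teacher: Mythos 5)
Your proposal follows essentially the same route as the paper: apply Theorem \ref{thm3.2} to the cut-off quantities $\omega^{\varepsilon},\phi^{\varepsilon},\Theta^{\varepsilon}$ satisfying (\ref{omega mod})--(\ref{vartheta mod}), and your factorisation $A=QM$ with $M(s)=\exp(-\int_{s}^{t}\phi)$ is precisely the paper's substitution $Q_{j}^{i}(\eta,t;s)=\mathrm{e}^{\int_{s}^{t}\phi}A_{j}^{i}(\eta,t;s)$, which produces the truncated exponential $\mathrm{e}^{\int_{0}^{s}\phi}$ in the source integral and the bare $Q(\eta,t;0)$ in the initial-data integral exactly as you describe. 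Your coefficient identifications and sign bookkeeping are correct (indeed more explicit than the paper's one-line argument for the $\omega$ case, with the other two cases left as ``alike''), so the proposal is sound.
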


\begin{proof}
Take $\omega$ as an example, we apply Theorem \ref{thm3.2} to (\ref{omega mod}), we have 
\begin{align*}
\omega^{\varepsilon}(\xi,t) & =\int_{\mathbb{R}^{3}}\mathbb{E}\left[\left.1_{\{t<\zeta(X^{\eta})\}}\textrm{e}^{\int_{0}^{t}\phi(X_{r},r)\textrm{d}r}A(\eta,t;0)\omega^{\varepsilon}(\eta,0)\right|X_{t}^{\eta}=\xi\right]p_{\mu}(0,\eta;t,\xi)\textrm{d}\eta\\
 & +\int_{0}^{t}\int_{\mathbb{R}^{3}}\mathbb{E}\left[\left.1_{\{s>\gamma_{t}(X^{\eta})\}}\textrm{e}^{\int_{0}^{t}\phi(X_{r},r)\textrm{d}r}A(\eta,t;s)\left(\nabla\wedge F(X_{s}^{\eta},s)+\chi^{\varepsilon}\right)\right|X_{t}^{\eta}=\xi\right]p_{\mu}(0,\eta;t,\xi)\textrm{d}\eta\textrm{d}s
\end{align*}
where 
\[
\frac{\textrm{d}}{\textrm{d}s}A_{j}^{i}(\eta,t;s)=-A_{k}^{i}(\eta,t;s)S_{j}^{k}(X_{s}^{\eta},s)+A_{j}^{i}(\eta,t;s)\phi(X_{s}^{\eta},s),\quad A_{j}^{i}(\eta,t;t)=\delta_{ij}.
\]
Since $Q_{j}^{i}(\eta,t;s)=e^{\int_{s}^{t}\phi(X_{s}^{\eta},s)\mathrm{d}s}A_{j}^{i}(\eta,t;s)$ for $t\in[0,T],$ the conclusion
follows immediately. Now since $\omega^{\varepsilon}(x,t)=\omega(x,t)-\omega_{b}(\boldsymbol{x},t)\psi(x_{d}/\varepsilon)$,
we have the desired formula. Formulas \ref{flower theta bounded} and \ref{phi bounded} comes alike.
\end{proof}
Combining these representations with Biot-Savart laws, we are able to establish integral functional representations for the
velocity and the temperature. 
\begin{thm}
For the temperature \textup{$\theta$}, the following representations holds: 
\begin{enumerate}
\item When $d=2$, 
\begin{align*}
\theta(x,t) & =\frac{x_{2}}{\pi}\int_{-\infty}^{\infty}\theta_{b}(\xi_{1},t)\frac{1}{\left|\xi_{1}-x_{1}\right|^{2}+x_{2}^{2}}d\xi_{1}\\
 & -\int_{0}^{\varepsilon}\left[\psi(\frac{\xi_{2}}{\varepsilon})\int_{\mathbb{R}}\Lambda_{2}(\xi,x)\cdot\Theta_{b}(\xi_{1},t)d\xi_{1}\right]d\xi_{2}\\
 & -\int_{\mathbb{R}_{+}^{2}}\mathbb{E}\left[1_{\{t<\zeta(Y^{\eta})\}}1_{\mathbb{R}_{+}^{2}}\textrm{e}^{\int_{0}^{t}\phi(Y_{r},r)\textrm{d}r}(Y_{t}^{\eta})\Lambda_{2}(Y_{t}^{\eta},x)\cdot R(\eta,t;0)\Theta^{\varepsilon}(\eta,0)\right]d\eta\\
 & -\int_{0}^{t}\int_{\mathbb{R}_{+}^{2}}\mathbb{E}\Big[1_{\{s>\gamma_{t}(Y^{\eta})\}}1_{\mathbb{R}_{+}^{2}}(Y_{t}^{\eta})\Lambda_{2}(Y_{t}^{\eta},x)\cdot\textrm{e}^{\int_{0}^{t}\phi(Y_{r},r)\textrm{d}r}R(\eta,t;s)\\
 & \ \times\left(\nabla g(Y_{s}^{\eta},s)+\alpha^{\varepsilon}(Y_{s}^{\eta},s)\right)\Big]\textrm{d}\eta\textrm{d}s
\end{align*}
for every $x\in\mathbb{R}_{+}^{2}$ and $t>0$. 
\item When $d=3$, 
\begin{align*}
\theta(x,t) & =\frac{x_{3}}{2\pi}\int_{\mathbb{R}^{2}}\theta_{b}(\xi_{1},\xi_{2},t)\frac{1}{\left|\xi_{1}-x_{1}\right|^{2}+\left|\xi_{2}-x_{2}\right|^{2}+x_{3}^{2}}d\xi_{1}d\xi_{2}\\
 & -\int_{0}^{\varepsilon}\left[\psi(\frac{\xi_{3}}{\varepsilon})\int_{\mathbb{R}^{2}}\Lambda_{3}(\xi,x)\cdot\Theta_{b}(\xi_{1},\xi_{2},t)d\xi_{1}d\xi_{2}\right]d\xi_{3}\\
 & -\int_{\mathbb{R}_{+}^{3}}\mathbb{E}\left[1_{\{t<\zeta(Y^{\eta})\}}1_{\mathbb{R}_{+}^{3}}\textrm{e}^{\int_{0}^{t}\phi(Y_{r},r)\textrm{d}r}(Y_{t}^{\eta})\Lambda_{3}(Y_{t}^{\eta},x)\cdot R(\eta,t;0)\Theta^{\varepsilon}(\eta,0)\right]d\eta\\
 & -\int_{0}^{t}\int_{\mathbb{R}_{+}^{3}}\mathbb{E}\Big[1_{\{s>\gamma_{t}(Y^{\eta})\}}1_{\mathbb{R}_{+}^{3}}(Y_{t}^{\eta})\Lambda_{3}(Y_{t}^{\eta},x)\cdot\textrm{e}^{\int_{0}^{t}\phi(Y_{r},r)\textrm{d}r}R(\eta,t;s)\\
 & \ \times\left(\nabla g(Y_{s}^{\eta},s)+\alpha^{\varepsilon}(Y_{s}^{\eta},s)\right)\Big]\textrm{d}\eta\textrm{d}s
\end{align*}
for every $x\in\mathbb{R}_{+}^{3}$ and $t>0$. 
\end{enumerate}
\end{thm}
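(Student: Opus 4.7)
The plan is to combine the Biot--Savart representations for $\theta$ on the half-space (equations (\ref{Th-2D-Half}) and (\ref{Th-3D_half})) with the integral representation for the temperature gradient $\Theta=\nabla\theta$ that was just established in Theorem \ref{prop:theta mod}. I would start from the Biot--Savart formula, which expresses $\theta(x,t)$ as a Poisson integral of the boundary datum $\theta_{b}$ plus a volume integral of $-\Lambda_{d}(\xi,x)\cdot\Theta(\xi,t)$ over $\mathbb{R}_{+}^{d}$. The Poisson part immediately reproduces the first line of each claimed formula, so the whole game is to unfold the volume integral using Theorem \ref{prop:theta mod}.

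Next, I substitute the representation (\ref{flower theta bounded}) of $\Theta(\xi,t)$ into the volume integral. This splits the contribution into three pieces matching the three terms on the right-hand side of (\ref{flower theta bounded}): a cut-off term $\psi(\xi_{d}/\varepsilon)\Theta_{b}(\boldsymbol{\xi},t)$, an initial-data term involving $\Theta^{\varepsilon}(\eta,0)$, and a source-plus-error term involving $\nabla g + \alpha^{\varepsilon}$. For the cut-off piece, since $\psi$ is supported in $[0,1)$, the factor $\psi(\xi_{d}/\varepsilon)$ vanishes whenever $\xi_{d}\geq\varepsilon$, so after Fubini the $\xi$-integral over $\mathbb{R}_{+}^{d}$ collapses to an integral over $\xi_{d}\in[0,\varepsilon]$; writing $\xi=(\boldsymbol{\xi},\xi_{d})$ and separating the $\boldsymbol{\xi}$-integration yields exactly the second line in both the $d=2$ and $d=3$ claims.

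For the remaining two pieces, I apply Fubini to swap the $\xi$-integration (from Biot--Savart) with the $\eta$- and $s$-integrations (from Theorem \ref{prop:theta mod}), then invoke the basic disintegration identity
\[
\int_{\mathbb{R}_{+}^{d}}\Phi(\xi)\,\mathbb{E}\bigl[G\mid Y_{t}^{\eta}=\xi\bigr]\,p_{\kappa}(0,\eta;t,\xi)\,\mathrm{d}\xi=\mathbb{E}\bigl[\Phi(Y_{t}^{\eta})\,G\,1_{\mathbb{R}_{+}^{d}}(Y_{t}^{\eta})\bigr]
\]
with $\Phi(\xi)=\Lambda_{d}(\xi,x)$; here the indicator $1_{\mathbb{R}_{+}^{d}}(Y_{t}^{\eta})$ appears precisely because the $\mathscr{L}_{\kappa}$-diffusion lives on the extended full space $\mathbb{R}^{d}$ via the reflected velocity field, while the Biot--Savart integral runs only over $\mathbb{R}_{+}^{d}$. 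This conversion transforms the conditional expectations in Theorem \ref{prop:theta mod} into the unconditional expectations in the last two lines of the claim, with $\Lambda_{d}(Y_{t}^{\eta},x)$ now sitting inside the expectation alongside $R(\eta,t;\cdot)$ and the exponential weight $\mathrm{e}^{\int_{0}^{t}\phi(Y_{r},r)\,\mathrm{d}r}$. The $d=3$ case goes through identically with $\Lambda_{3}$ in place of $\Lambda_{2}$ and the corresponding 3D Poisson kernel on the half-space.

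The main technical obstacle is the local singularity of the Biot--Savart kernel, $|\Lambda_{d}(y,x)|\sim|y-x|^{1-d}$, which is only locally integrable and blows up as $y\to x$. To apply Fubini and to ensure the expectations $\mathbb{E}[\Lambda_{d}(Y_{t}^{\eta},x)\cdot(\cdots)]$ are well defined, one must verify that the law of $Y_{t}^{\eta}$ under the Gaussian-type transition density $p_{\kappa}(0,\eta;t,\cdot)$, together with moment bounds on $R$ and $\Theta^{\varepsilon}$, absorbs this singularity; this is the standard step in the random vortex literature. A secondary check is that the half-space indicator $1_{\mathbb{R}_{+}^{d}}(Y_{t}^{\eta})$ produced by disintegration is consistent with the reflection structure $G_{d}(y,x)=\Gamma_{d}(y,x)-\Gamma_{d}(y,\bar{x})$ built into $\Lambda_{d}$, so that the Poisson boundary term captures the contribution from $\partial D$ without overlap with the volume integral.
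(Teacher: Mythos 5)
Your proposal is correct and follows exactly the route the paper intends: the paper gives no separate proof for this theorem beyond the remark that it follows by ``combining these representations with Biot--Savart laws,'' i.e.\ substituting the representation (\ref{flower theta bounded}) of $\Theta$ into the half-space formulas (\ref{Th-2D-Half}), (\ref{Th-3D_half}) and disintegrating the conditional expectations against $p_{\kappa}(0,\eta;t,\xi)\,\mathrm{d}\xi$, precisely as in the appendix computation for Theorem \ref{thmA.2}. Your handling of the cut-off term (collapsing the $\xi_{d}$-integral to $[0,\varepsilon]$ by the support of $\psi$) and of the singularity of $\Lambda_{d}$ matches the paper's treatment.
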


\begin{thm}
\label{thm6.6} For the velocity $u$, the following representations holds: 
\begin{enumerate}
\item When $d=2$, 
\begin{align*}
u(x,t) & =\int_{0}^{\varepsilon}\left[\psi\left(\frac{\xi_{2}}{\varepsilon}\right)\int_{-\infty}^{\infty}\Lambda_{2}(\xi,x)\wedge\omega_{b}(\xi_{1},t)-\Lambda_{2}(\xi,x)\phi_{b}(\xi_{1},t)d\xi_{1}\right]d\xi_{2}\\
 & +\int_{\mathbb{R}_{+}^{2}}\mathbb{E}\left[1_{\{t<\zeta(X^{\eta})\}}1_{\mathbb{R}_{+}^{2}}(X_{t}^{\eta})\Lambda_{2}(X_{t}^{\eta},x)\wedge\omega^{\varepsilon}(\eta,0)\right]\textrm{d}\eta\\
 & +\int_{0}^{t}\int_{\mathbb{R}_{+}^{2}}\mathbb{E}\left[1_{\{s>\gamma_{t}(X^{\eta})\}}1_{\mathbb{R}_{+}^{2}}(X_{t}^{\eta})\Lambda_{2}(X_{t}^{\eta},x)\wedge\textrm{e}^{\int_{0}^{s}\phi(X_{r},r)\textrm{d}r}\left(\nabla\wedge F(X_{s}^{\eta},s)+\chi^{\varepsilon}(X_{s}^{\eta},s)\right)\right]\textrm{d}\eta\textrm{d}s\\
 & -\int_{\mathbb{R}_{+}^{2}}\mathbb{E}\left[1_{\{t<\zeta(Z^{\eta})\}}1_{\mathbb{R}_{+}^{2}}(Z_{t}^{\eta})\Lambda_{2}(Z_{t}^{\eta},x)\textrm{e}^{\int_{0}^{t}\phi(Z_{r},r)\textrm{d}r}\phi^{\varepsilon}(\eta,0)\right]d\eta\\
 & -\int_{0}^{t}\int_{\mathbb{R}_{+}^{d}}\mathbb{E}\left[1_{\{s>\gamma_{t}(Z^{\eta})\}}1_{\mathbb{R}_{+}^{2}}(Z_{t}^{\eta})\Lambda_{2}(Z_{t}^{\eta},x)\textrm{e}^{\int_{0}^{t}\phi(Z_{r},r)\textrm{d}r}\vartheta^{\varepsilon}(Z_{s}^{\eta},s)\right]\textrm{d}\eta\textrm{d}s
\end{align*}
for every $x\in\mathbb{R}_{+}^{2}$ and $t>0$. 
\item When $d=3$, 
\begin{align*}
u(x,t) & =\int_{0}^{\varepsilon}\left[\psi\left(\frac{\xi_{3}}{\varepsilon}\right)\int_{\mathbb{R}^{2}}\Lambda_{3}(\xi,x)\wedge\omega_{b}(\xi_{1},\xi_{2},t)-\Lambda_{3}(\xi,x)\phi_{b}(\xi_{1},\xi_{2},t)d\xi_{1}d\xi_{2}\right]d\xi_{3}\\
 & +\int_{\mathbb{R}_{+}^{3}}\mathbb{E}\left[1_{\{t<\zeta(X^{\eta})\}}1_{\mathbb{R}_{+}^{2}}(X_{t}^{\eta})\Lambda_{3}(X_{t}^{\eta},x)\wedge Q(\eta,t;0)\omega^{\varepsilon}(\eta,0)\right]\textrm{d}\eta\\
 & +\int_{0}^{t}\int_{\mathbb{R}_{+}^{3}}\mathbb{E}\Big[1_{\{s>\gamma_{t}(X^{\eta})\}}1_{\mathbb{R}_{+}^{3}}(X_{t}^{\eta})\Lambda_{3}(X_{t}^{\eta},x)\wedge\textrm{e}^{\int_{0}^{s}\phi(X_{r},r)\textrm{d}r}Q(\eta,t;s)\\
 & \ \times\left(\nabla\wedge F(X_{s}^{\eta},s)+\chi^{\varepsilon}(X_{s}^{\eta},s)\right)\Big]\textrm{d}\eta\textrm{d}s\\
 & -\int_{\mathbb{R}_{+}^{3}}\mathbb{E}\left[1_{\{t<\zeta(Z^{\eta})\}}1_{\mathbb{R}_{+}^{3}}(Z_{t}^{\eta})\Lambda_{3}(Z_{t}^{\eta},x)\textrm{e}^{\int_{0}^{t}\phi(Z_{r},r)\textrm{d}r}\phi^{\varepsilon}(\eta,0)\right]d\eta\\
 & -\int_{0}^{t}\int_{\mathbb{R}_{+}^{3}}\mathbb{E}\left[1_{\{s>\gamma_{t}(Z^{\eta})\}}1_{\mathbb{R}_{+}^{3}}(Z_{t}^{\eta})\Lambda_{3}(Z_{t}^{\eta},x)\textrm{e}^{\int_{0}^{t}\phi(Z_{r},r)\textrm{d}r}\vartheta^{\varepsilon}(Z_{s}^{\eta},s)\right]\textrm{d}\eta\textrm{d}s
\end{align*}
for every $x\in\mathbb{R}_{+}^{3}$ and $t>0$. 
\end{enumerate}
\end{thm}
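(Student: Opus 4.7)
\textbf{Proof proposal for Theorem \ref{thm6.6}.} The plan is to substitute the stochastic representations of $\omega$ and $\phi$ obtained in Theorem \ref{prop:theta mod} into the Biot--Savart identity (\ref{BS-e1-1}), and then re-express each resulting conditional integral as an unconditional expectation by integrating out the terminal variable against the transition density. Concretely, starting from
\[
u(x,t)=\int_{\mathbb{R}_{+}^{d}}\Lambda_{d}(\xi,x)\wedge\omega(\xi,t)\,\mathrm{d}\xi-\int_{\mathbb{R}_{+}^{d}}\Lambda_{d}(\xi,x)\phi(\xi,t)\,\mathrm{d}\xi,
\]
I would plug in (\ref{omega bounded}) for $\omega(\xi,t)$ in the first integral and (\ref{phi bounded}) for $\phi(\xi,t)$ in the second. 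This immediately splits $u(x,t)$ into three families of terms: (i) boundary layer contributions coming from $\psi(\xi_d/\varepsilon)\omega_b$ and $\psi(\xi_d/\varepsilon)\phi_b$; (ii) initial-data contributions involving $\omega^\varepsilon(\cdot,0)$ and $\phi^\varepsilon(\cdot,0)$; (iii) source contributions from $\nabla\wedge F+\chi^\varepsilon$ and $\vartheta^\varepsilon$.

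For (i), I would use the fact that by construction $\psi(\xi_d/\varepsilon)$ is supported in $\xi_d\in[0,\varepsilon]$, so
\[
\int_{\mathbb{R}_{+}^{d}}\Lambda_{d}(\xi,x)\wedge\psi(\xi_d/\varepsilon)\omega_b(\boldsymbol{\xi},t)\,\mathrm{d}\xi=\int_{0}^{\varepsilon}\psi(\xi_d/\varepsilon)\int_{\mathbb{R}^{d-1}}\Lambda_{d}(\xi,x)\wedge\omega_b(\boldsymbol{\xi},t)\,\mathrm{d}\boldsymbol{\xi}\,\mathrm{d}\xi_d,
\]
and similarly with $\wedge$ replaced by scalar multiplication for the $\phi_b$ term. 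This produces precisely the first line in either case of the statement. For (ii) and (iii), the key algebraic step is the following \emph{unconditioning identity}: for any Borel function $\Phi$ on $\mathbb{R}_+^d$ and any path-functional $F$ of $X^\eta$,
\[
\int_{\mathbb{R}_{+}^{d}}\mathbb{E}\bigl[F\bigm|X_{t}^{\eta}=\xi\bigr]\,p_{\mu}(0,\eta;t,\xi)\,\Phi(\xi)\,\mathrm{d}\xi=\mathbb{E}\bigl[F\,\Phi(X_{t}^{\eta})\,1_{\mathbb{R}_{+}^{d}}(X_{t}^{\eta})\bigr],
\]
which follows from the tower property and the definition of the transition density (the indicator $1_{\mathbb{R}_+^d}(X_t^\eta)$ comes from restricting the outer $\xi$-integration to the half-space). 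Applying this with $\Phi(\xi)=\Lambda_d(\xi,x)$ (contracted via $\wedge$ or via scalar product, as appropriate) to every term of (\ref{omega bounded}) and (\ref{phi bounded}), and an analogous identity for $Z^\eta$ with density $p_{\mu+\lambda}$, converts each conditional expectation into the unconditional form displayed in the theorem. The order of integration between $\xi$ and $(\eta,s)$ is exchanged by Fubini, which is legitimate given the boundedness of $Q,R$, the cut-off source terms $\chi^\varepsilon,\vartheta^\varepsilon$, and the standard local integrability of the Biot--Savart kernel against heat-type densities.

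The structure of the 3D case is identical, differing only in the shape of $\Lambda_3$, the wedge product conventions, and the inclusion of the cocycle $Q(\eta,t;s)$ (which does not degenerate to a scalar as in 2D because $(\omega\cdot\nabla)u$ is nontrivial). I would therefore treat the 3D case in parallel, invoking the same two-step scheme: substitute the $\omega,\phi$ representations, isolate the boundary-layer contribution, then unconditioned via the density identity.

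The main obstacle I anticipate is not algebraic but regulatory: justifying the Fubini interchange and the unconditioning identity when $\Lambda_d(\xi,x)$ has a singularity at $\xi=x$ and, in the 3D case, when the cocycle $Q$ may grow with the velocity gradient. The safe route is to argue these bounds pointwise in $x$ away from $\partial D$ using the sub-Gaussian bound on $p_\mu$ and $p_{\mu+\lambda}$ together with the local integrability $\Lambda_d\in L^{1}_{\mathrm{loc}}$, and to exploit the compactness of support of $\psi'$ and $\psi''$ to keep $\chi^\varepsilon,\vartheta^\varepsilon,\alpha^\varepsilon$ under control. Once these are in place, the theorem is a direct consequence of the Biot--Savart law combined with Theorem \ref{prop:theta mod}.
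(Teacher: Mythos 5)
Your proposal is correct and follows essentially the same route as the paper: the paper presents Theorem \ref{thm6.6} as a direct consequence of substituting the representations (\ref{omega bounded}) and (\ref{phi bounded}) into the Biot--Savart law (\ref{BS-e1-1}), with the conditional expectations integrated against $p_{\mu}$ and $p_{\mu+\lambda}$ collapsing to unconditional ones exactly as in your unconditioning identity (the same substitution-then-Fubini scheme is carried out explicitly in the proof of Theorem \ref{thmA.2} in the appendix). Your added remarks on the support of $\psi$ for the boundary-layer term and on the absence of the cocycle $Q$ in the 2D initial-data term are consistent with the paper.
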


\section{Numerical experiments of Oberbeck-Boussinesq flows in \texorpdfstring{$\mathbb{R}_{+}^{2}$}{.}}

In this section, we present several numerical experiments based on simplified equations of compressible Oberbeck-Boussinesq flows. All simulations are grounded in the formulation introduced in Section 5. Owing to the explicit form of the formulation, we can apply a direct discrete scheme to handle the integrals in the representation formulas (converting integrals into finite sums) and use the finite difference method to compute the density. 

Let $\theta_{b}$ denotes the boundary temperature, representing the heat source at the boundary. For 2-dimensional model, the external
force $F$ is given by 
\[
F^{1}(x,t)=0,F^{2}(x,t)=\alpha(\theta(x,t)-\theta_{b}(x_{1},t))
\]
and 
\[
\nabla\wedge F(x,t)=\alpha(\Theta^{1}(x,t)-\frac{\partial}{\partial x_{1}}\theta_{b}(x_{1},t)).
\]

While in 3-dimensional model, the force $F$ applying to the flow has components 
\[
F^{1}(x,t)=0,\quad F^{2}(x,t)=0,\quad F^{3}(x,t)=\alpha(\theta(x,t)-\theta_{b}(x_{1},x_{2},t)).
\]
Therefore 
\begin{align*}
[\nabla\wedge F(x,t)]^{1} & =\frac{\partial}{\partial x_{2}}\theta(x,t)-\frac{\partial}{\partial x_{2}}\theta_{b}(x_{1},x_{2},t),\\{}
[\nabla\wedge F(x,t)]^{2} & =-\frac{\partial}{\partial x_{1}}\theta(x,t)-\frac{\partial}{\partial x_{1}}\theta_{b}(x_{1},x_{2},t),
\end{align*}
and $[\nabla\wedge F(x,t)]^{3}=0$. In numerical experiments, $\varepsilon>0$ is chosen sufficiently small, so that the
error terms $\alpha^{\varepsilon}(x,t),\,\chi^{\varepsilon}(x,t)$and $\vartheta^{\varepsilon}(x,t)$
appearing (\ref{eq:alpha}, \ref{eq:chi}, \ref{eq:vartheta}), according to Theorem \ref{thmA.1} and Theorem
\ref{thmA.2}, can be simplified as follows: 
\[
\alpha^{\varepsilon}(x,t)=\frac{\kappa}{\rho_{0}}\frac{1}{\varepsilon^{2}}\psi^{\prime\prime}(\frac{x_{d}}{\varepsilon})\Theta_{b}(\boldsymbol{x},t),
\]
\[
\chi^{\varepsilon}(x,t)=\frac{\mu}{\rho}\frac{1}{\varepsilon^{2}}\psi^{\prime\prime}(\frac{x_{d}}{\varepsilon})\omega_{b}(\boldsymbol{x},t),
\]
\[
\vartheta^{\varepsilon}(x,t)=\frac{\mu+\lambda}{\rho}\frac{1}{\varepsilon^{2}}\psi^{\prime\prime}(\frac{x_{d}}{\varepsilon})\phi_{b}(\boldsymbol{x},t).
\]

The proof of the previous approximation is quite technical and is left for the Appendix below. In this section we shall report
two numerical experiments, one for an unbounded Oberbeck-Boussinesq flow and another for a wall-bounded flow.

For convenience of the reader, we will restate some notations here. In numerical experiments, we focus on the following
dynamical variables: the velocity $u(x,t)$, the vorticity $\omega(x,t)$, the expansion-rate $\phi(x,t)$, the temperature
$\theta(x,t)$, the density $\rho(x,t)$, and the gradient $\Theta(x,t)$ of $\theta(x,t)$. $\mu>0,\lambda>0$ and $\kappa>0$
are constants related to the physical properties of the fluid. Initial conditions and boundary conditions are specified in
numerical experiments. The following notations will be used: $x^{i_{1},i_{2}}=(i_{1}s,i_{2}s)$, where $s>0$ also denotes
the lattice mesh in this section, $\omega^{i_{1},i_{2}}=\omega(x^{i_{1},i_{1}},0)$, and similarly for $\phi^{i_{1},i_{2}},\Theta^{i_{1},i_{2}}$
for $i_{1},i_{2}\in\mathbb{Z}$.

%\subsection{Oberbeck-Boussinesq flows in \texorpdfstring{$\mathbb{R}_{+}^{2}$}{.}}

Let $\varepsilon>0$ and $\epsilon_{B}>0$ be two small parameters. Since the Biot-Savart kernel is singular, we substitute
$\Lambda_{2}$ by $\Lambda_{2,\epsilon_{B}}$, where

\begin{align*}
\Lambda_{2,\epsilon_{B}}(y,x) & =\left(1-\textrm{e}^{-|y-x|^{2}/\epsilon_{B}}\right)\Lambda_{2}(y,x) \\
&=\left(1-\textrm{e}^{-|y -x|^{2}/\epsilon_{B}}\right)\nabla_y \left( \Gamma_{2}(y,x)-\Gamma_2(y,\bar{x})\right)
\end{align*}

The numerical scheme in this subsection includes $\omega^{\varepsilon}$, $\phi^{\varepsilon}$and $\Theta^{\varepsilon}$
so we define $\omega^{\varepsilon,i_{1},i_{2}}=\omega^{\varepsilon}(x^{i_{1},i_{2}},0)$, $\phi^{\varepsilon,i_{1},i_{2}}=\phi^{\varepsilon}(x^{i_{1},i_{2}},0)$,
$\Theta^{\varepsilon,i_{1},i_{2}}=\Theta^{\varepsilon}(x^{i_{1},i_{2}},0)$. According to non-slip condition (i.e. $u(x,t)=0$
in $\partial D$) of Oberbeck-Boussinesq equations, so that in our model, $\omega^{\varepsilon,i_{1},i_{2}}=\omega(x^{i_{1},i_{2}},0)$,
$\phi^{\varepsilon,i_{1},i_{2}}=\phi(x^{i_{1},i_{2}},0)$, and by the definition in \ref{eq:chi} and \ref{eq:vartheta}.
As the cut-off method mentioned in Section 5, we may choose the concrete cut-off function $\psi$ as follows:

\[
\psi(r)=\begin{cases}
1 & r\in[0,\frac{1}{3})\\
54(r-\frac{1}{2})^{3}-\frac{9}{2}(r-\frac{1}{2})+\frac{1}{2} & r\in[\frac{1}{3},\frac{2}{3}]\\
0 & r\in(\frac{2}{3},\infty)
\end{cases}
\]
which is a $C^{2}$-function. The numerical scheme of Oberbeck-Boussinesq flows in $\mathbb{R}_{+}^{2}$ is introduced as
before.

We carry out numerical experiment based on the one-copy scheme for the upper half-plane case $\mathbb{R}_{+}^{2}$, where
the heat source arises from boundary.  Set time step
$\delta=0.1$, $T=120$. Let the Prandtl numbers($Pr$) of the fluid be 0.15, $\mu=0.15,\kappa=1,\lambda=0.15$,
$\alpha=0.005$, the Biot-Savart law kernel constant $\epsilon_{B}=0.05$, $L=2\pi$ with mesh size $s=0.05\pi$, which is used for integral discretization, then the observation region is $[-L,L]\times[0,L]$. For seeing the flow within the boundary layer
$[-L,L]\times[0,s]$, the thin layer is further partitioned with mesh size $s_{b}=0.1s$ in vertical direction. It should be noted that we set the upper limit and and lower limit of the first integral term in the temperature formula as 100 and -100 respectively to approximate the original improper integral . It is reasonable because the integration value outside of this integral area is so small that can be ignored.

We simulate the scenario of heating a fluid from the boundary. In this case, we set that the fluid is initially in a stationary
state, with a same initial temperature throughout. That is $u(x,0)=(0,0)$, $\omega(x,0)=0$, $\phi(x,0)=0$ and $\theta(x,0)=0.01$.
Then we assume that the external heat source form the boundary is given by 
\[
\theta_{b}(x_{1},t)=\begin{cases}
0.25t\textrm{e}^{-0.5x_{1}^{2}L^{-2}} & t<80\\
20e^{-0.5x_{1}^{2}L^{-2}} & t\geq80
\end{cases}
\]

That is, the external heat source gradually increases, and when the central temperature reaches 20 , the heat source is maintained at that temperature. Hence, 
\[
\Theta_{b}^{1}(x_{1},t)=\begin{cases}
-0.25L^{-2}tx_{1}e^{-0.5x_{1}^{2}L^{-2}} & t<80\\
-20L^{-2}x_{1}e^{-0.5x_{1}^{2}L^{-2}} & t\geq80
\end{cases}
\]
and we set the heating source $g(x,t)=(\theta(x,0)+0.05t)\textrm{e}^{-0.5|x|^{2}L^{-2}}$, so that 
\[
\nabla g(x,t)=\left[-(0.01+0.05t)x_{1}L^{-2}e^{-0.5x^{2}L^{-2}};-(0.01+0.05t)x_{2}L^{-2}e^{-0.5x^{2}L^{-2}}\right].
\]

\begin{figure}[H]
\begin{centering}
\includegraphics[width=1\textwidth]{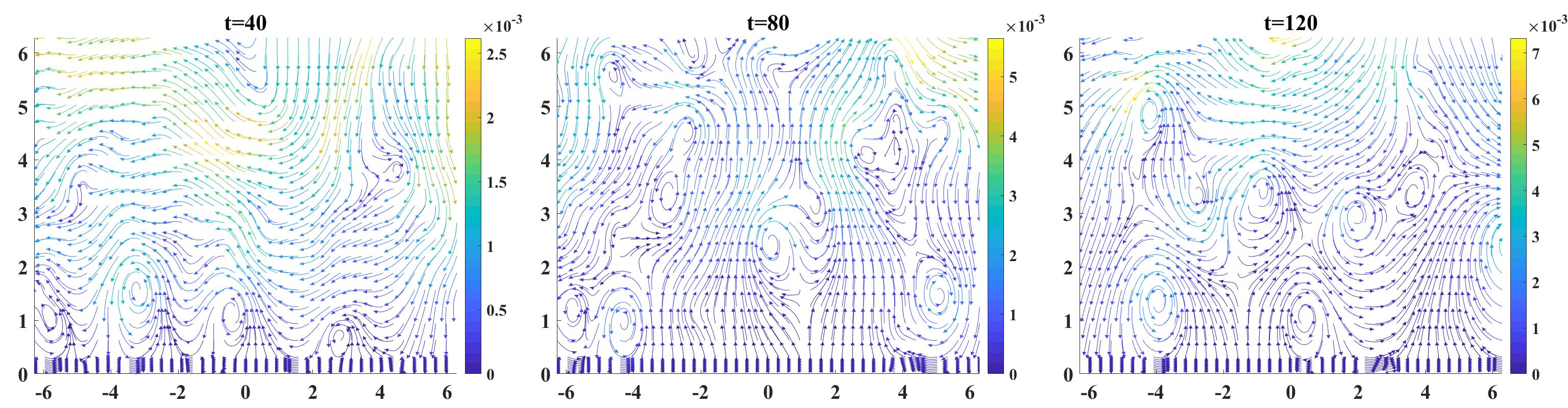}
\par\end{centering}
\caption{Velocity fields of Oberbeck-Boussinesq flows on $\mathbb{R}_{+}^{2}$}\label{fig:bounded u} 
\end{figure}
Figure \ref{fig:bounded u} shows that, by applying the heating source at the boundary ($x_2=0$), rotating vector
fields are generated from the boundary (Figure \ref{fig:bounded u}, t=40), and more and more vortices are formed afterwards, which 
make the vortex system more chaotic and gradually diffuse into other regions of the fluid region (see Figure \ref{fig:bounded u}, t=80 and 120). With the continuous addition
of heat, the temperature (cf. Figure \ref{fig:bounded theta})  keeps rising and the velocity of the fluid flow increases progressively. 
\begin{figure}[H]
\begin{centering}
\includegraphics[width=1\textwidth]{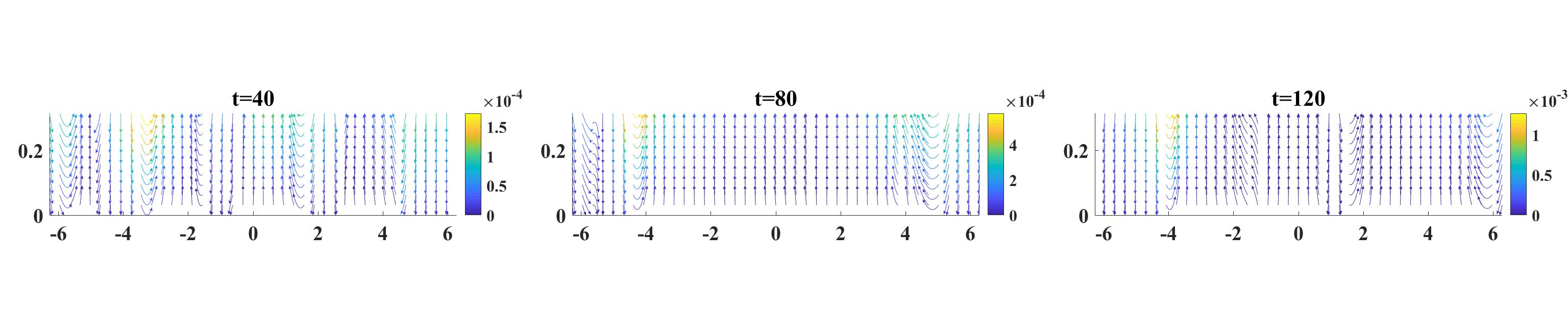}\par\end{centering}
\caption{Boundary velocity fields of Oberbeck-Boussinesq flows on $\mathbb{R}_{+}^{2}$}\label{fig:bounded u_b} 
\end{figure}

To better demonstrate the behavior of the flow within the boundary layer, we have used a refined partitioning at the boundary of the fluid system, shown in Figure \ref{fig:bounded u_b}. B\'enard convection as well as the hairy
type of boundary flows are presented in Figure \ref{fig:bounded u} and \ref{fig:bounded u_b}, confirming
the theoretical models and observations.

\begin{figure}[H]
\begin{centering}
\includegraphics[width=1\textwidth]{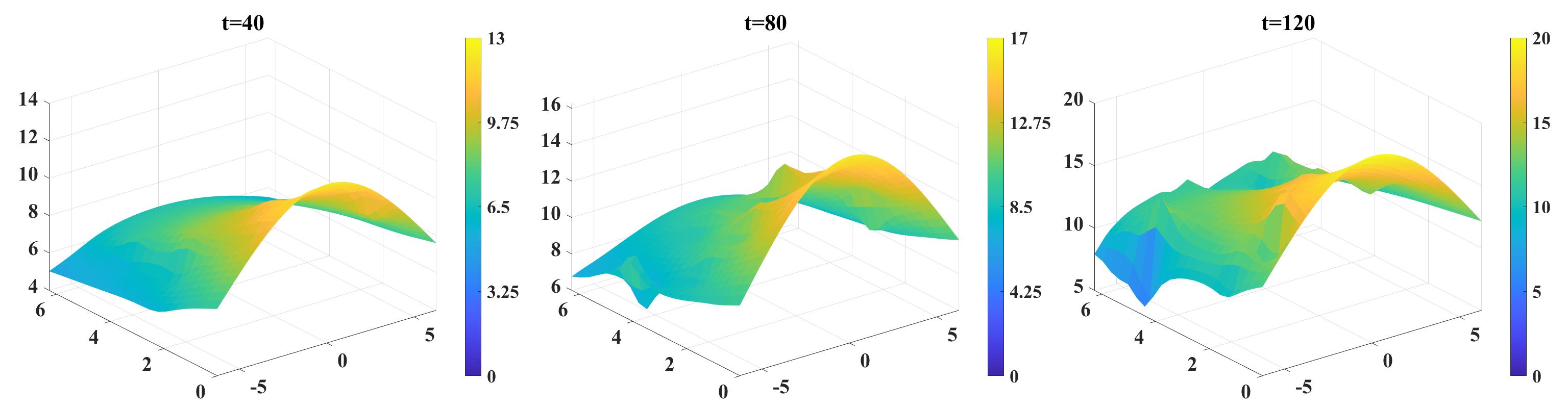}\par\end{centering}
\caption{Temperature of Oberbeck-Boussinesq flows on $\mathbb{R}_{+}^{2}$}\label{fig:bounded theta} 
\end{figure}
Because we add the boundary heating source in the form of a compactly supported
function, the Figure \ref{fig:bounded theta} clearly shows that the distribution of temperature
conforms to the solution of the heat equation with some fluctuations caused by motions of Brownian particles. The temperature increases as heat continues to be added. 
\begin{figure}[H]
\begin{centering}
\includegraphics[width=1\textwidth]{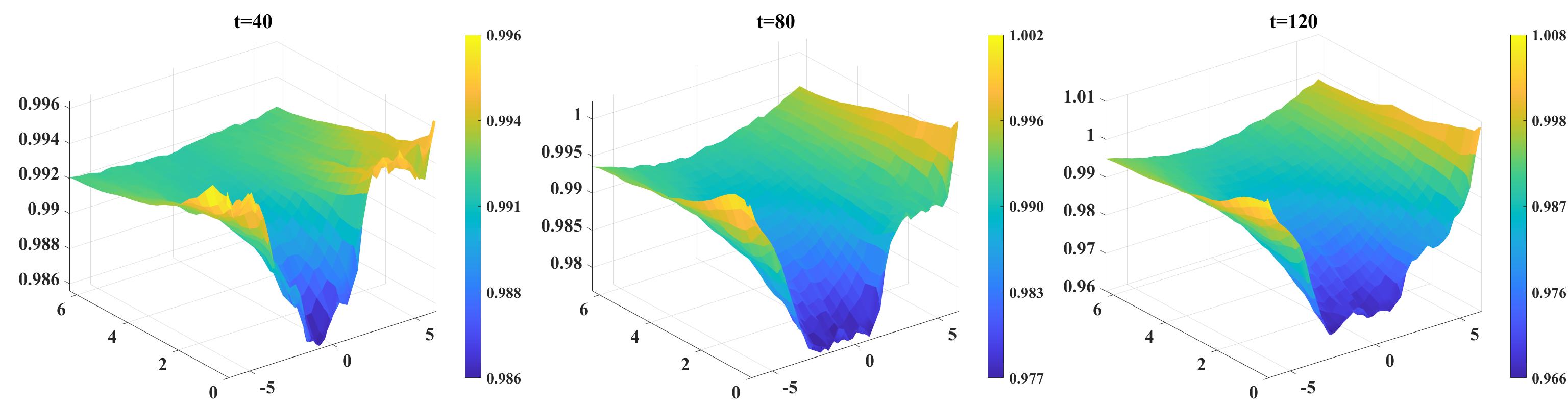}\par\end{centering}
\caption{Density of Oberbeck-Boussinesq flows on $\mathbb{R}_{+}^{2}$}\label{fig:bounded density}
\end{figure}

Although the variation of the fluid density is not so big, the density still changes as demonstrating in Figure \ref{fig:bounded density}, which is consistent with our assumption of near incompressibility. By comparing Figures \ref{fig:bounded theta} with Figure \ref{fig:bounded density}, we observe that the distribution of density is essentially inverse to that of temperature, with density decreasing in regions of higher temperature, which is consistent with the existing theories.

\begin{figure}[h]
\begin{centering}
\includegraphics[width=1\textwidth]{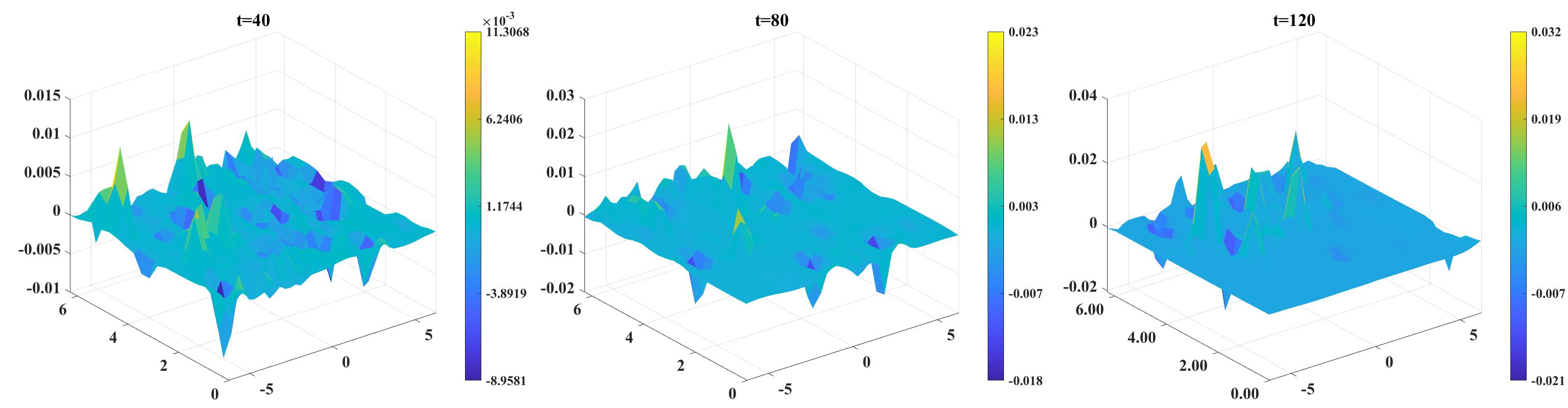}\par\end{centering}
\caption{Vorticity of Oberbeck-Boussinesq flows on $\mathbb{R}_{+}^{2}$}\label{fig:bounded w}
\end{figure}

\begin{figure}[H]
\noindent \begin{centering}
\includegraphics[width=1\textwidth]{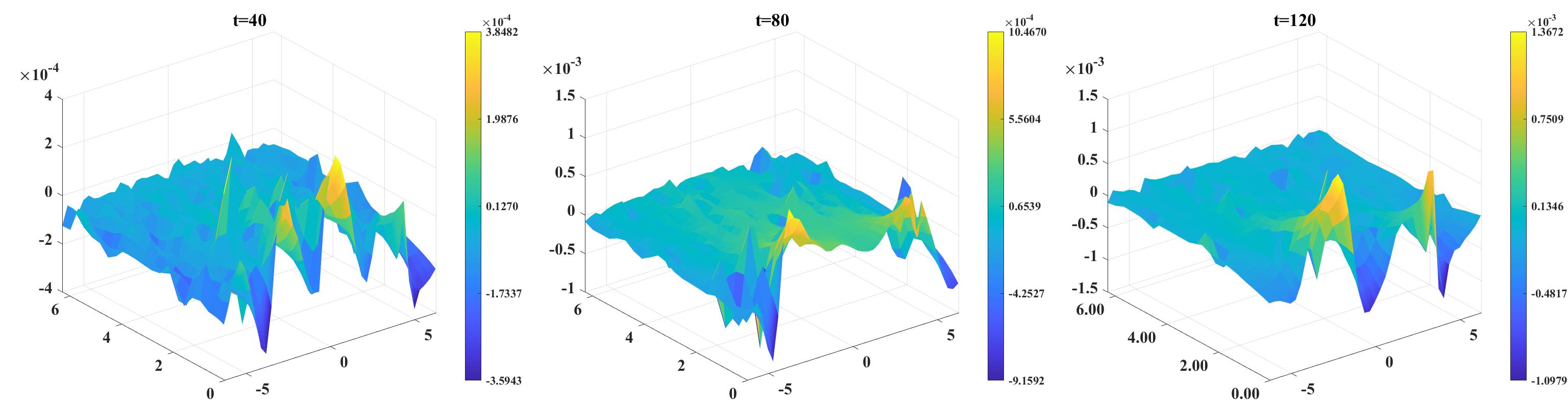}\par\end{centering}
\caption{Expansion rate of Oberbeck-Boussinesq flows on $\mathbb{R}_{+}^{2}$}\label{fig:bounded phi}
\end{figure}

Figure \ref{fig:bounded density}, \ref{fig:bounded w} and \ref{fig:bounded phi} show that the changes in density, vorticity, and expansion rate are all not so big, which aligns with our hypothesis. It is evident that the expansion rate and vorticity exhibit some fluctuations, however, the  expansion rate is significantly smaller than that of other physical quantities. All these experimental results are consistent with both theoretical expectations and empirical observations, indicating the validity of our theoretical model and numerical scheme. 

\section*{Appendix - Estimating error terms}

In this part, we will show the consistency of approximation for error terms $\alpha^{\varepsilon}(x,t)$, $\chi^{\varepsilon}(x,t)$
and $\vartheta^{\varepsilon}(x,t)$ in previous sections, such that
\begin{align*}
\lim_{\epsilon\rightarrow0}\chi^{\varepsilon}(x,t) & =\lim_{\epsilon\rightarrow0}\frac{\mu}{\rho}\frac{1}{\varepsilon^{2}}\psi^{\prime\prime}(\frac{x_{d}}{\varepsilon})\omega_{b}(\boldsymbol{x},t),\\
\lim_{\epsilon\rightarrow0}\alpha^{\varepsilon}(x,t) & =\lim_{\epsilon\rightarrow0}\frac{\kappa}{\rho_{0}}\frac{1}{\varepsilon^{2}}\psi^{\prime\prime}(\frac{x_{d}}{\varepsilon})\Theta_{b}(\boldsymbol{x},t),\\
\lim_{\epsilon\rightarrow0}\vartheta^{\varepsilon}(x,t) & =\lim_{\epsilon\rightarrow0}\frac{\mu+\lambda}{\rho}\frac{1}{\varepsilon^{2}}\psi^{\prime\prime}(\frac{x_{d}}{\varepsilon})\phi_{b}(\boldsymbol{x},t).
\end{align*}
The notation $p_{\lambda\rho^{-1},u}$ is simplified to be $p_{\lambda}$ and the velocity is extended for all $x\in\mathbb{R}^{d}=D$,
so that 
\[
p_{\lambda}^{D}(s,\xi,t,x)=p_{\lambda}(\tau,\xi,t,x)-p_{\lambda}(\tau,\bar{\xi},t,x)
\]
is clearly the Green function to Dirichlet problem of $\mathscr{L}_{\lambda\rho^{-1},-u}-\frac{\partial}{\partial t}$. We
will again drop $u$ and $\rho^{-1}$ for the following notations of transition probabilities. We have the following equality
where $\nabla\rho^{-1}$ is negligible 
\[
p_{\mu}(0,\xi,t,x)=\Sigma_{\mu\rho^{-1};u}^{*}(\eta,0;x,t)=\Sigma_{\mu\rho^{-1};-u,-\phi}(x,t;\xi,0).
\]

By applying {[}\citep{Friedman 1964}, Theorem 12 on page 25{]} to Proposition \ref{prop:theta mod} for $\omega^{\varepsilon}(x,t)$,
$\theta^{\varepsilon}(x,t)$ and $\phi^{\varepsilon}(x,t)$ for $d=3$ we have 
\begin{align*}
\omega^{\varepsilon}(x,t) & =\int_{D}p_{\mu}^{D}(0,\xi,t,x)C(\xi,t)\cdot\omega^{\varepsilon}(\xi,0)\mathrm{d}\eta+\int_{0}^{t}\int_{D}p_{\mu}^{D}(s,\xi,t,x)C(\xi,s)\cdot\nabla\wedge F(\xi,s)\mathrm{d}\xi\mathrm{d}s\\
 & +\int_{0}^{t}\int_{D}p_{\mu}^{D}(s,\xi,t,x)C(\xi,s)\cdot\chi_{\varepsilon}(\xi,s)\mathrm{d}\xi\mathrm{d}s,
\end{align*}
and 
\[
\Theta^{\varepsilon}(x,t)=\int_{D}p_{\kappa}^{D}(0,\xi,t,x)\check{C}(\xi,t)\cdot\Theta^{\varepsilon}(\xi,0)\mathrm{d}\xi+\int_{0}^{t}\int_{D}p_{\kappa}^{D}(s,\xi,t,x)\check{C}(\xi,s)\cdot\alpha_{\varepsilon}(\xi,s)\mathrm{d}\eta\mathrm{d}s;
\]
where $C(\eta,t)=C(\eta,t;0)$, $\check{C}(\eta,t)=\check{C}(\eta,t;0)$ and 
\begin{align*}
\frac{\mathrm{d}}{\mathrm{d}t}C(\xi,t;0)-C(\xi,t;0)\cdot S(\xi,t) & =0,\quad C_{j}^{i}(\xi,0;0)=\delta_{j}^{i},\\
\frac{\mathrm{d}}{\mathrm{d}t}\check{C}(\xi,t;0)-\check{C}(\xi,t;0)\cdot\bar{S}(\xi,t) & =0,\quad\check{C}_{j}^{i}(\xi,0;0)=\delta_{j}^{i}.
\end{align*}
Also we have 
\begin{align*}
\phi^{\varepsilon}(x,t)=\int_{D}p_{\mu+\lambda}^{D}(0,\xi,t,x)\phi^{\varepsilon}(\xi,0)\mathrm{d}\xi+\int_{0}^{t}\int_{D}p_{\mu+\lambda}^{D}(s,\xi,t,x)\vartheta_{\varepsilon}(\xi,s)\mathrm{d}\xi\mathrm{d}s.
\end{align*}

In particular, for $d=2$ we have the similar representation for $\Theta^{\varepsilon}(x,t)$ and $\phi^{\varepsilon}(x,t)$,
with $\omega^{\varepsilon}(x,t)$ is simplified to be 
\begin{align*}
\omega^{\varepsilon}(x,t) & =\int_{D}p_{\mu}^{D}(0,\xi,t,x)\omega^{\varepsilon}(\xi,0)\mathrm{d}\xi+\int_{0}^{t}\int_{D}p_{\mu}^{D}(s,\xi,t,x)\nabla\wedge F(\xi,s)\mathrm{d}\xi\mathrm{d}s\\
 & +\int_{0}^{t}\int_{D}p_{\mu}^{D}(s,\xi,t,x)\chi_{\varepsilon}(\xi,s)\mathrm{d}\xi\mathrm{d}s.
\end{align*}
Since the case where $d=2$ is just a special case of three dimensional case, we will only illustrate the proofs where $d=3$. 
\begin{defn}
If $\varphi$ is a Borel measurable function on $\mathbb{R}_{+}^{d}$, then for every $x\in\mathbb{R}^{d}$ we define 
\[
\hat{\varphi}(x)=1_{\{x_{d}>0\}}\varphi(x)-1_{\{x_{d}<0\}}\varphi(\bar{x}).
\]
\end{defn}

\begin{lem}
Let $D=\mathbb{R}_{+}^{d}$ and $\lambda>0$. Then for every $x\in D$ and $t>s\geq0$ we have 
\[
\int_{D}p_{\mu}^{D}(s,\xi,t,x)\varphi(\xi)\mathrm{d}\xi=\int_{\mathbb{R}^{d}}p_{\mu}(s,\xi,t,x)\hat{\varphi}(\xi)\mathrm{d}\xi.
\]
\end{lem}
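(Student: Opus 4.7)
The plan is a short direct computation using the definition of the reflected Green kernel together with a reflection change of variables; I do not anticipate any substantive obstacle, so I will sketch the three small steps in order.

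First I would write the right-hand side by unfolding the definition of $\hat\varphi$, splitting the integral over $\mathbb{R}^{d}$ into the two half-spaces $\{\xi_d>0\}=D$ and $\{\xi_d<0\}$, to get
\[
\int_{\mathbb{R}^{d}} p_{\mu}(s,\xi,t,x)\hat\varphi(\xi)\,\mathrm{d}\xi
= \int_{D} p_{\mu}(s,\xi,t,x)\varphi(\xi)\,\mathrm{d}\xi
- \int_{\{\xi_d<0\}} p_{\mu}(s,\xi,t,x)\varphi(\bar\xi)\,\mathrm{d}\xi.
\]

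Second, in the second integral I would apply the change of variables $\eta=\bar\xi$. The reflection map $\xi\mapsto\bar\xi=(\boldsymbol{\xi},-\xi_d)$ is a measure-preserving involution on $\mathbb{R}^{d}$ which sends $\{\xi_d<0\}$ bijectively onto $D$. Since $\bar{\bar\eta}=\eta$, this yields
\[
\int_{\{\xi_d<0\}} p_{\mu}(s,\xi,t,x)\varphi(\bar\xi)\,\mathrm{d}\xi
= \int_{D} p_{\mu}(s,\bar\eta,t,x)\varphi(\eta)\,\mathrm{d}\eta.
\]

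Third I would combine the two pieces and recognise the integrand, using the formula $p_{\mu}^{D}(s,\xi,t,x)=p_{\mu}(s,\xi,t,x)-p_{\mu}(s,\bar\xi,t,x)$ recalled just above the lemma:
\[
\int_{D} p_{\mu}(s,\xi,t,x)\varphi(\xi)\,\mathrm{d}\xi - \int_{D} p_{\mu}(s,\bar\xi,t,x)\varphi(\xi)\,\mathrm{d}\xi
= \int_{D} p_{\mu}^{D}(s,\xi,t,x)\varphi(\xi)\,\mathrm{d}\xi,
\]
which is the left-hand side of the claimed identity. The only mild point to check is that $\varphi$ being merely Borel measurable is enough to justify Fubini/change of variables; this follows because $p_\mu$ is nonnegative and integrable in $\xi$, so one first proves the identity for nonnegative $\varphi$ by monotone convergence and then extends by linearity (or treats positive and negative parts separately). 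No other subtlety arises.
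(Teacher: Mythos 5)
Your computation is correct: unfolding $\hat{\varphi}$, reflecting the integral over $\{\xi_{d}<0\}$ back onto $D$ via the measure-preserving involution $\xi\mapsto\bar{\xi}$, and recombining with $p_{\mu}^{D}(s,\xi,t,x)=p_{\mu}(s,\xi,t,x)-p_{\mu}(s,\bar{\xi},t,x)$ gives exactly the claimed identity. The paper states this lemma without proof, and your argument is the evident direct verification the authors intended; the remark about reducing to nonnegative $\varphi$ to justify the manipulations is a reasonable finishing touch.
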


Hence we are able to derive the following representations. 
\begin{lem}
For every $\varepsilon>0$ we have 
\begin{align*}
\omega^{\varepsilon}(x,t) & =\int_{D}p_{\mu}^{D}(0,\xi,t,x)C(\xi,t)\cdot\omega^{\varepsilon}(\xi,0)\mathrm{d}\xi+\int_{0}^{t}\int_{D}p_{\mu}^{D}(s,\xi,t,x)C(\xi,s)\cdot\nabla\wedge F(\xi,s)\mathrm{d}\xi\mathrm{d}s\\
 & +\int_{0}^{t}\int_{D}p_{\mu}^{D}(s,\xi,t,x)C(\xi,s)\cdot\chi_{\varepsilon}(\xi,s)\mathrm{d}\xi\mathrm{d}s,
\end{align*}
\[
\Theta^{\varepsilon}(x,t)=\int_{D}p_{\kappa}^{D}(0,\xi,t,x)\check{C}(\xi,t)\cdot\Theta^{\varepsilon}(\xi,0)\mathrm{d}\xi+\int_{0}^{t}\int_{D}p_{\kappa}^{D}(s,\xi,t,x)\check{C}(\xi,s)\cdot\alpha_{\varepsilon}(\xi,s)\mathrm{d}\xi\mathrm{d}s
\]
and
\[
\phi^{\varepsilon}(x,t)=\int_{D}p_{\mu+\lambda}^{D}(0,\xi,t,x)\phi^{\varepsilon}(\xi,0)\mathrm{d}\xi+\int_{0}^{t}\int_{D}p_{\mu+\lambda}^{D}(s,\xi,t,x)\vartheta_{\varepsilon}(\xi,s)\mathrm{d}\xi\mathrm{d}s
\]
where 
\begin{align*}
\frac{\mathrm{d}}{\mathrm{d}t}C(\xi,t;0)-C(\xi,t;0)\cdot S(\xi,t) & =0,\quad C_{j}^{i}(\xi,0;0)=\delta_{j}^{i},\\
\frac{\mathrm{d}}{\mathrm{d}t}\check{C}(\xi,t;0)+\check{C}(\xi,t;0)\cdot\overline{S}(\xi,t) & =0,\quad\check{C}_{j}^{i}(\xi,0;0)=\delta_{j}^{i}.
\end{align*}
for every $x\in\mathbb{R}_{+}^{d}$ and $t>0$. 
\end{lem}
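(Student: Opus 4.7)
The plan is to apply Friedman's Duhamel representation for Dirichlet problems (\citep{Friedman 1964}, Theorem 12) to each of the three linear parabolic systems (\ref{omega mod}), (\ref{phi mod}), (\ref{vartheta mod}) in turn. By construction the unknowns $\omega^{\varepsilon}$, $\phi^{\varepsilon}$, $\Theta^{\varepsilon}$ all vanish on $\partial D$, so the relevant Green kernel is precisely the Dirichlet fundamental solution on $D=\mathbb{R}_{+}^{d}$, which has just been identified via the reflection identity $p_{\lambda}^{D}(\tau,\xi,t,x)=p_{\lambda}(\tau,\xi,t,x)-p_{\lambda}(\tau,\bar{\xi},t,x)$.

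First I would put each of the three equations into the standard form $(\mathscr{L}_{a;-u,c}-\partial_{t})v+Mv+f=0$ with $v|_{\partial D}=0$. For the expansion-rate $\phi^{\varepsilon}$ no zeroth-order matrix multiplier $M$ is present, so applying the Dirichlet Duhamel formula with $p_{\mu+\lambda}^{D}$ and source $\vartheta_{\varepsilon}$ produces the stated representation directly. For the vorticity equation, I would absorb the scalar potential $-\phi$ into the transition kernel using the duality $p_{\mu}=\Sigma_{\mu\rho^{-1};-u,-\phi}$ recalled above (which is the probabilistic translation of the continuity equation (\ref{AA-c})); the remaining matrix multiplier is then $S$. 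Similarly, for (\ref{AA_4_1}) the remaining matrix multiplier is $-\bar{S}$ and no scalar potential survives.

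Next I would handle each matrix multiplier by the standard resolvent conjugation: introduce the matrix-valued solutions $C(\xi,t;0)$ and $\check{C}(\xi,t;0)$ of the linear ODEs $\dot{C}-CS=0$ and $\dot{\check{C}}+\check{C}\bar{S}=0$ with identity initial data, and observe that left-multiplying the PDE by the conjugating matrix removes the multiplier, reducing the problem to the bare Dirichlet Duhamel situation already handled for $\phi^{\varepsilon}$. Undoing the conjugation inserts $C(\xi,t;0)$ and $\check{C}(\xi,t;0)$ in front of the initial data, and $C(\xi,s;0)$, $\check{C}(\xi,s;0)$ in front of every source evaluation at intermediate time $s$, producing exactly the three formulas in the statement.

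The main technical point is bookkeeping the signs and matrix orderings in the ODEs for $C$ and $\check{C}$: these two matrices satisfy ODEs with opposite signs because $S$ appears with a $+$ on the left-hand side of (\ref{omega mod}) while $\bar{S}$ appears with a $+$ on the left-hand side of (\ref{AA_4_1}) (after moving $\partial_{t}$); moreover Theorem \ref{thm3.2} is formulated with a backward-in-time initialization $A(\psi,T;T)=\delta_{ij}$, which has to be translated into the forward-in-time convention $C(\xi,0;0)=\delta_{j}^{i}$ used here. Once these conventions are aligned consistently, combining the Dirichlet reflection kernel, Duhamel's principle, and the conjugating matrices delivers the three displayed representations.
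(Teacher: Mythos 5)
Your proposal matches the paper's own route: the paper likewise obtains these formulas by applying Friedman's Theorem 12 (the Duhamel/Green-function representation for Dirichlet parabolic problems) to the cut-off equations, using the reflected kernel $p_{\lambda}^{D}(\tau,\xi,t,x)=p_{\lambda}(\tau,\xi,t,x)-p_{\lambda}(\tau,\bar{\xi},t,x)$, absorbing the scalar potential $-\phi$ via the identity $p_{\mu}=\Sigma_{\mu\rho^{-1};-u,-\phi}$, and conjugating away the matrix multipliers $S$ and $-\bar{S}$ with the forward-in-time matrices $C$ and $\check{C}$. Your remark about the sign bookkeeping is apt — the paper itself states the $\check{C}$ equation with inconsistent signs in its two appearances in the appendix — but the argument is the same.
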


By letting $\varepsilon\downarrow0$, we have the following theorem. 
\begin{thm}
\label{thmA.1}The following integral representations hold: 
\begin{align}
\omega(x,t) & =1_{\{x_{3}=0\}}\omega_{b}(x_{1},x_{2},t)+\int_{\mathbb{R}^{3}}p_{\mu}(0,\xi,t,x)C(\xi,t)\cdot\hat{\omega}(\xi,0)\mathrm{d}\xi\nonumber \\
 & \quad+\int_{0}^{t}\int_{\mathbb{R}^{3}}p_{\mu}(s,\xi,t,x)C(\xi,s)\cdot\hat{f}(\xi,s)\mathrm{d}\xi\mathrm{d}s\nonumber \\
 & \quad+2\mu\int_{0}^{t}\int_{\{\xi_{3}=0\}}\frac{\partial}{\partial\xi_{3}}\Bigg|_{\xi_{3}=0}\left[p_{\mu}(s,\xi,t,x)C(\xi,s)\right]\cdot\omega_{b}(\xi,s)\mathrm{d}\xi_{1}\mathrm{d}\xi_{2}\mathrm{d}s,\label{eq:omega_new}
\end{align}
\begin{align}
\Theta(x,t) & =1_{\{x_{3}=0\}}\Theta_{b}(x_{1},x_{2},t)+\int_{\mathbb{R}^{3}}p_{\kappa}(0,\xi,t,x)\check{C}(\xi,t)\cdot\hat{\Theta}(\xi,0)\mathrm{d}\xi\nonumber \\
 & \quad+2\kappa\int_{0}^{t}\int_{\{\xi_{3}=0\}}\frac{\partial}{\partial\xi_{3}}\Bigg|_{\xi_{3}=0}\left[p_{\kappa}(s,\xi,t,x)\check{C}(\xi,s)\right]\cdot\Theta_{b}(\xi,s)\mathrm{d}\xi_{1}\mathrm{d}\xi_{2}\mathrm{d}s\label{eq:vartheta_new}
\end{align}
and
\begin{align}
\phi(x,t) & =1_{\{x_{3}=0\}}\phi_{b}(x_{1},x_{2},t)+\int_{\mathbb{R}^{3}}p_{\mu+\lambda}(0,\xi,t,x)\hat{\phi}(\xi,0)\mathrm{d}\xi\nonumber \\
 & \quad+2(\mu+\lambda)\int_{0}^{t}\int_{\{\xi_{3}=0\}}\phi_{b}(\xi,s)\frac{\partial}{\partial\xi_{3}}\Bigg|_{\xi_{3}=0}p_{\mu+\lambda}(s,\xi,t,x)\mathrm{d}\xi_{1}\mathrm{d}\xi_{2}\mathrm{d}s,\label{eq:phi_new}
\end{align}
for $x\in\mathbb{R}_{+}^{3}$ and $f\coloneqq\nabla\wedge F$. 
\end{thm}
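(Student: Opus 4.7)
The plan is to pass to the limit $\varepsilon\downarrow 0$ in the three integral representations given by the preceding lemma for $\omega^{\varepsilon}$, $\Theta^{\varepsilon}$, $\phi^{\varepsilon}$, and then use the cut-off identities $\omega=\omega^{\varepsilon}+\omega_{b}(\boldsymbol{x},t)\psi(x_{d}/\varepsilon)$, together with the analogous identities for $\Theta$ and $\phi$, to recover the full quantities. Because $\psi(x_{d}/\varepsilon)\to 1_{\{x_{d}=0\}}$ as $\varepsilon\downarrow 0$, this additive correction contributes the leading indicator term $1_{\{x_{3}=0\}}\omega_{b}(x_{1},x_{2},t)$ (respectively $\Theta_{b}$, $\phi_{b}$) appearing in \eqref{eq:omega_new}, \eqref{eq:vartheta_new}, and \eqref{eq:phi_new}.

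For the initial datum and forcing integrals in the representation of $\omega^{\varepsilon}$ I would first invoke the preceding lemma to convert each half-space integral against the Dirichlet kernel $p_{\mu}^{D}$ into a whole-space integral against $p_{\mu}$ and the odd extension $\hat{\cdot}$. Since $\omega^{\varepsilon}(\xi,0)\to\omega(\xi,0)$ pointwise for every interior $\xi\in D$ and is uniformly bounded, dominated convergence delivers the two whole-space volume integrals in \eqref{eq:omega_new}; the same reasoning handles the $\varepsilon$-independent forcing $\nabla\wedge F$.

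The technical heart of the proof is the analysis of the error-term integral
\[
I^{\varepsilon}=\int_{0}^{t}\!\int_{D}p_{\mu}^{D}(s,\xi,t,x)\,C(\xi,s)\cdot\chi^{\varepsilon}(\xi,s)\,d\xi\,ds.
\]
According to \eqref{eq:chi}, $\chi^{\varepsilon}$ decomposes into three contributions carrying the factors $\psi(\xi_{d}/\varepsilon)$, $\varepsilon^{-1}\psi'(\xi_{d}/\varepsilon)$, and $\varepsilon^{-2}\psi''(\xi_{d}/\varepsilon)$ respectively. After the substitution $u=\xi_{d}/\varepsilon$ in the $\xi_{d}$-integration, each contribution becomes an integral over $u\in[0,\infty)$ of $\psi^{(k)}(u)\,g_{k}(\varepsilon u)$ times a factor $\varepsilon^{1-k}$, where $g_{k}$ collects the remaining factors $p_{\mu}^{D}$, $C$, $\omega_{b}$ and the appropriate coefficient. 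The crucial observation is that $p_{\mu}^{D}(s,(\boldsymbol{\xi},0),t,x)=0$, so $g_{k}(0)=0$, and a Taylor expansion $g_{k}(\varepsilon u)=\varepsilon u\,g_{k}'(0)+O(\varepsilon^{2})$ shows that the first two contributions are $O(\varepsilon)$ while the third converges to
\[
g_{3}'(0)\int_{0}^{\infty}u\,\psi''(u)\,du=g_{3}'(0),
\]
the last equality obtained by two integrations by parts using $\psi(0)=1$ and $\psi(\infty)=\psi'(0)=\psi'(\infty)=0$. Finally, the reflection identity
\[
\frac{\partial}{\partial\xi_{d}}\bigg|_{\xi_{d}=0}p_{\mu}^{D}(s,\xi,t,x)=2\frac{\partial}{\partial\xi_{d}}\bigg|_{\xi_{d}=0}p_{\mu}(s,\xi,t,x),
\]
which follows from $p_{\mu}^{D}(s,\xi,\cdot,\cdot)=p_{\mu}(s,\xi,\cdot,\cdot)-p_{\mu}(s,\bar{\xi},\cdot,\cdot)$ together with the chain rule applied to $\bar{\xi}$, produces the factor $2\mu$ and the boundary integral of \eqref{eq:omega_new}.

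The representations \eqref{eq:vartheta_new} and \eqref{eq:phi_new} follow by the same procedure applied to $\alpha^{\varepsilon}$ and $\vartheta^{\varepsilon}$, using $\check{C}$ in place of $C$ and, for $\phi$, omitting the propagator matrix entirely. I expect the main obstacle to be producing an $\varepsilon$-uniform dominating function for the Taylor remainders so as to justify dominated convergence in the full space-time integral $I^{\varepsilon}$; this requires quantitative bounds on $\partial_{\xi_{d}}^{2}p_{\mu}^{D}$ near $\{\xi_{d}=0\}$, which can be obtained from standard Gaussian heat-kernel estimates combined with the compact support of $\psi'$ and $\psi''$ in $[1/3,2/3]$ and the smoothness of $u$ and $\rho^{-1}$.
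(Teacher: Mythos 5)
Your proposal is correct and follows essentially the same route as the paper: pass to the limit in the $\varepsilon$-regularized representations, observe that only the $\varepsilon^{-2}\psi''$ part of the error term survives because $p^{D}$ vanishes on $\{\xi_{3}=0\}$ while the $\psi$ and $\varepsilon^{-1}\psi'$ parts are $O(\varepsilon)$, and extract the factor $2$ from the reflection identity for $\partial_{\xi_{3}}p^{D}_{\mu}\big|_{\xi_{3}=0}$. The only cosmetic difference is that you evaluate the one-dimensional limit by rescaling $\xi_{3}=\varepsilon v$ and Taylor-expanding (using $\int_{0}^{\infty}v\,\psi''(v)\,\mathrm{d}v=1$), whereas the paper integrates by parts twice in $\xi_{3}$ directly; the two computations are equivalent and rest on the same boundary facts ($p_{\mu}^{D}=0$ and $u^{d}=0$ on the wall, $\psi(0)=1$, $\psi'$ supported away from $0$).
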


\begin{proof}
We will prove (\ref{eq:omega_new}) in detail, the other arguments are similar. We have 
\[
\lim_{\varepsilon\rightarrow0+}\int_{\mathbb{R}^{3}}p_{\mu}(0,\xi,t,x)C(\xi,t)\cdot\hat{\omega^{\epsilon}}(\xi,0)\mathrm{d}\xi=\int_{\mathbb{R}^{3}}p_{\mu}(0,\xi,t,x)C(\xi,t)\cdot\hat{\omega}(\xi,0)\mathrm{d}\xi.
\]
Hence we only need to deal with the limit of error term 
\[
J(x,\varepsilon)=\int_{0}^{t}\int_{\mathbb{R}^{3}}p_{\mu}(s,\xi,t,x)C(\xi,s)\cdot\hat{\chi}_{\varepsilon}(\xi,s)\mathrm{d}\xi\mathrm{d}s
\]
where $\chi_{\varepsilon}$ is given in (\ref{eq:chi}). Since when we let $\varepsilon\downarrow0$ the contribution of
the first term vanishes, therefore we only need to calculate 
\[
E_{1}^{\varepsilon}(x,t)=-\frac{1}{\varepsilon}\phi'\left(\frac{x_{3}}{\varepsilon}\right)\omega_{b}(x_{1},x_{2},t)\cdot C(x,t)u^{3}(x,t)
\]
and 
\[
E_{2}^{\varepsilon}(x,t)=\mu\frac{1}{\varepsilon^{2}}\phi''\left(\frac{x_{3}}{\varepsilon}\right)\omega_{b}(x_{1},x_{2},t)\cdot C(x,t).
\]
Let 
\[
J_{i}(x,\varepsilon)=\int_{0}^{t}\int_{\mathbb{R}^{3}}p_{\mu}(s,\xi,t,x)C(x,t)\cdot\hat{E_{i}^{\varepsilon}}(\xi,s)\mathrm{d}\xi\mathrm{d}s
\]
where $i=1,2$. By no-slip condition and integration by part we have 
\begin{align*}
I_{1}(\varepsilon) & \coloneqq\int_{0}^{\infty}p_{\mu}(s,\xi,t,x)E_{1}^{\varepsilon}(\xi,s)\mathrm{d}\xi_{3}\\
 & =-\omega_{b}(\xi_{1},\xi_{2},t)\int_{0}^{\varepsilon}u^{3}(\xi,s)p_{\mu}(s,\xi,t,x)C(\xi,s)\cdot\frac{1}{\varepsilon}\psi'\left(\frac{\xi_{2}}{\varepsilon}\right)\mathrm{d}\xi_{3}\\
 & =\omega_{b}(\xi_{1},\xi_{2},t)\int_{0}^{\varepsilon}\frac{\partial}{\partial\xi_{3}}\left[u^{3}(\xi,s)p_{\mu}(s,\xi,t,x)C(\xi,s)\right]\cdot\psi\left(\frac{\xi_{2}}{\varepsilon}\right)\mathrm{d}\xi_{3}\\
 & \rightarrow0\quad\textrm{as}\;\varepsilon\downarrow0.
\end{align*}
Hence we have 
\[
\lim_{\varepsilon\rightarrow0+}J_{1}(x,\varepsilon)=\lim_{\varepsilon\rightarrow0+}\int_{0}^{t}\int_{\mathbb{R}^{3}}p_{\mu}(s,\xi,t,x)\hat{E_{i}^{\varepsilon}}(\xi,s)\mathrm{d}\xi\mathrm{d}s=0.
\]

Now we consider $J_{2}(x,\varepsilon)$. We notice that 
\[
J_{2}(x,\varepsilon)=\int_{0}^{t}\int_{\mathbb{R}^{2}}\bigg[\int_{0}^{\infty}(p_{\mu}(s,\xi,t,x)-p_{\mu}(s,\bar{\xi},t,x))E_{i}^{\varepsilon}(\xi,s)\mathrm{d}\xi_{3}\bigg]\mathrm{d}\xi_{2}\mathrm{d}\xi_{1}\mathrm{d}s
\]
and again by no-slip condition and integration by part we have 
\begin{align*}
I_{2}(\varepsilon) & \coloneqq\int_{0}^{\infty}\left(p_{\mu}(s,\xi,t,x)-p_{\mu}(s,\bar{\xi},t,x)\right)E_{i}^{\varepsilon}(\xi,s)\mathrm{d}\xi_{3}\\
 & =\mu\omega_{b}(\xi_{1},\xi_{2},t)\int_{0}^{\varepsilon}\left(p_{\mu}(s,\xi,t,x)-p_{\mu}(s,\bar{\xi},t,x)\right)C(x,t)\cdot\frac{1}{\varepsilon^{2}}\psi''\left(\frac{x_{3}}{\varepsilon}\right)\mathrm{d}\xi_{3}\\
 & =-\mu\omega_{b}(\xi_{1},\xi_{2},t)\int_{0}^{\varepsilon}\frac{\partial}{\partial\eta_{3}}\left[(p_{\mu}(s,\xi,t,x)-p_{\mu}(s,\bar{\xi},t,x))C(x,t)\right]\cdot\frac{1}{\varepsilon}\psi'\left(\frac{x_{3}}{\varepsilon}\right)\mathrm{d}\xi_{3}\\
 & =2\mu\omega_{b}(\xi_{1},\xi_{2},t)\frac{\partial}{\partial\xi_{3}}\Bigg|_{\xi_{3}=0}\left[(p_{\mu}(s,\xi,t,x)C(x,t)\right]\\
 & \quad+\mu\omega_{b}(\xi_{1},\xi_{2},t)\int_{0}^{\varepsilon}\frac{\partial^{2}}{\partial\xi_{3}^{2}}\left[(p_{\mu}(s,\xi,t,x)-p_{\mu}(s,\bar{\xi},t,x))C(x,t)\right]\cdot\psi\left(\frac{x_{3}}{\varepsilon}\right)\mathrm{d}\xi_{3}.
\end{align*}
Then we have 
\[
I_{2}(\varepsilon)\rightarrow2\mu\sigma(\xi_{1},\xi_{2},t)\frac{\partial}{\partial\xi_{3}}\Bigg|_{\xi_{3}=0}\left[p_{\mu}(s,\xi,t,x)C(x,t)\right]\quad as\;\varepsilon\downarrow0.
\]
Hence we can conclude that 
\[
\lim_{\varepsilon\rightarrow0+}J_{2}(x,\varepsilon)=2\mu\int_{0}^{t}\int_{\{\xi_{3}=0\}}\frac{\partial}{\partial\xi_{3}}\Bigg|_{\xi_{3}=0}\left[p_{\mu}(s,\xi,t,x)C(\xi,s)\right]\cdot\omega_{b}(\eta,s)\mathrm{d}\xi_{1}\mathrm{d}\xi_{2}\mathrm{d}s.
\]

By the Biot-Savart law, we have the following theorem. 
\end{proof}
\begin{thm}
\label{thmA.2}Let $X$, $Y$ and $Z$ be defined as in (\ref{sde X-1-2}), (\ref{sde Y-1-2}) and (\ref{sde Z-1-2}). The
following functional integral representations hold: 
\begin{align*}
u(x,t) & =\int_{\mathbb{R}^{3}}\mathbb{E}\bigg[1_{\mathbb{R}_{+}^{3}}(X_{t}^{\eta})\Lambda_{2}(X_{t}^{\eta},x)\wedge(C(x,t)\cdot\hat{\omega}(\eta,0))\bigg]\mathrm{d}\eta\\
 & \quad+\int_{0}^{t}\int_{\mathbb{R}^{3}}\mathbb{E}\bigg[1_{\mathbb{R}_{+}^{3}}(X_{t}^{\eta,s})\Lambda_{3}(X_{t}^{\eta,s},x)\wedge(C(x,t)\cdot\hat{f}(\eta,s))\bigg]\mathrm{d}\eta\mathrm{d}s\\
 & \quad+2\mu\int_{0}^{t}\int_{\{\eta_{3}=0\}}\frac{\partial}{\partial\eta_{3}}\Bigg|_{\eta_{3}=0}\mathbb{E}\bigg[1_{\mathbb{R}_{+}^{3}}(X_{t}^{\eta,s})\Lambda_{3}(X_{t}^{\eta,s},x)\wedge(C(\eta,s)\cdot\omega_{b}(\eta_{1},\eta_{2},s))\bigg]\mathrm{d}\eta_{1}\mathrm{d}\eta_{2}\mathrm{d}s\\
 & \quad-\int_{\mathbb{R}^{3}}\mathbb{E}\bigg[\Lambda_{3}(Z_{t}^{\xi,\tau},x)\hat{\phi}(\xi,0)\big]\mathrm{d}\xi\\
 & \quad-2(\mu+\lambda)\int_{0}^{t}\int_{\{\xi_{3}=0\}}\frac{\partial}{\partial\xi_{3}}\Bigg|_{\xi_{3}=0}\mathbb{E}\bigg[\Lambda_{3}(Z_{t}^{\xi,\tau},x)\wedge\phi_{b}(\xi,s)\bigg]\mathrm{d}\xi_{1}\mathrm{d}\xi_{2}\mathrm{d}s.
\end{align*}
for every $x\in\mathbb{R}_{+}^{2}$ and $t>s\geq0$. 
\end{thm}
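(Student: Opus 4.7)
The plan is to obtain the representation for $u$ by substituting the representations of $\omega$ and $\phi$ from Theorem \ref{thmA.1} into the Biot-Savart law (\ref{BS-e1-1}) on $\mathbb{R}_+^3$, and then rewriting every volume integral against a transition density as an expectation under the appropriate diffusion law. First I would write
\[
u(x,t)=\int_{\mathbb{R}_{+}^{3}}\Lambda_{3}(y,x)\wedge\omega(y,t)\,\mathrm{d}y-\int_{\mathbb{R}_{+}^{3}}\phi(y,t)\,\Lambda_{3}(y,x)\,\mathrm{d}y
\]
and observe that the boundary indicator terms $1_{\{y_{3}=0\}}\omega_{b}$ and $1_{\{y_{3}=0\}}\phi_{b}$ appearing in (\ref{eq:omega_new}) and (\ref{eq:phi_new}) contribute zero to these Lebesgue volume integrals, so they drop out immediately and do not produce a boundary stratum in the final formula.

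Next, for each of the three bulk terms in the $\omega$-representation and the two bulk terms in the $\phi$-representation, I would apply Fubini's theorem to interchange the $y$-integration (against $\Lambda_{3}(y,x)$) with the $\xi$-integration and, where present, the $s$-integration. After the swap, the inner integral has the form $\int_{\mathbb{R}^{3}}\Lambda_{3}(y,x)\wedge p_{\mu}(s,\xi,t,y)\,g(\xi)\,\mathrm{d}y$ (and analogously with $p_{\mu+\lambda}$), which I would recognise as the expectation $\mathbb{E}[\,1_{\mathbb{R}_{+}^{3}}(X_{t}^{\xi,s})\,\Lambda_{3}(X_{t}^{\xi,s},x)\wedge g(\xi)\,]$ by the very definition of $X^{\xi,s}$ and $Z^{\xi,s}$ as the $\mathscr{L}_{\mu\rho^{-1},u}$- and $\mathscr{L}_{(\mu+\lambda)\rho^{-1},u}$-diffusions introduced in (\ref{sde X-1-2})--(\ref{sde Z-1-2}); the indicator $1_{\mathbb{R}_{+}^{3}}$ captures the fact that the $y$-integration is restricted to $\mathbb{R}_{+}^{3}$ while the transition density is defined on all of $\mathbb{R}^{3}$.

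For the two surface integrals over $\{\xi_{3}=0\}$ in (\ref{eq:omega_new}) and (\ref{eq:phi_new}), the derivative $\partial_{\xi_{3}}p_{\mu}(s,\xi,t,y)\big|_{\xi_{3}=0}$ is treated as a distributional kernel and Fubini is again applied; the interchange of $\partial/\partial\xi_{3}|_{\xi_{3}=0}$ with the $y$-integral yields the corresponding derivative of $\mathbb{E}\bigl[1_{\mathbb{R}_{+}^{3}}(X_{t}^{\xi,s})\Lambda_{3}(X_{t}^{\xi,s},x)\wedge(C(\xi,s)\cdot\omega_{b})\bigr]$ at $\xi_{3}=0$, and analogously for the $\phi_{b}$ term via $Z^{\xi,s}$. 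Collecting the five resulting terms in the order $\omega^{\varepsilon}$-initial, $f$-source, $\omega_{b}$-boundary, $\phi^{\varepsilon}$-initial, $\phi_{b}$-boundary produces exactly the stated formula.

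The main technical obstacle will be the justification of Fubini and of the derivative/integral interchange near the wall, because $\Lambda_{3}(y,x)$ is singular at $y=x$ and $\partial_{\xi_{3}}p_{\mu}(s,\xi,t,y)$ is singular as $s\uparrow t$. I would handle these two obstructions in the standard way: the Biot-Savart kernel is locally integrable in $\mathbb{R}^{3}$ (integrable singularity of order $|y-x|^{-2}$), so Fubini applies once one bounds the relevant factors of $C$, $\hat{\omega}$, $\hat{\phi}$, $\hat{f}$ and $\omega_{b},\phi_{b}$ uniformly; and the surface terms are interpreted by first taking $\xi_{3}>0$, evaluating the expectation, and only then passing to the trace $\xi_{3}\downarrow 0$, which is legitimate under the boundedness assumptions on the derivatives of the coefficients already in force from Section 3. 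Once these interchanges are secured, no further computation is needed.
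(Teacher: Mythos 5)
Your proposal follows essentially the same route as the paper: substitute the representations of $\omega$ and $\phi$ from Theorem \ref{thmA.1} into the Biot--Savart law on $\mathbb{R}_{+}^{3}$, discard the measure-zero boundary-indicator terms, interchange the $y$-integration with the $\xi$- and $s$-integrations, and identify each inner integral against $p_{\mu}$ or $p_{\mu+\lambda}$ as an expectation over the corresponding diffusion $X^{\eta,s}$ or $Z^{\xi,s}$. The only difference is that you spell out the Fubini and trace-limit justifications, which the paper leaves implicit; the argument is the same.
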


\begin{proof}
By Biot-Savart law and (\ref{eq:omega_new}), we have 
\begin{align*}
u(x,t) & =\int_{\mathbb{R}_{+}^{3}}\Lambda_{3}(y,x)\wedge\omega(y,t)\mathrm{d}y-\int_{\mathbb{R}_{+}^{3}}\phi(y)\Lambda_{3}(y,x)\mathrm{d}y\\
 & =\int_{\mathbb{R}_{+}^{3}}\Lambda_{3}(y,x)\wedge1_{\{y_{3}=0\}}\omega_{b}(y_{1},y_{2},t)\mathrm{d}y\\
 & \quad+\int_{\mathbb{R}^{3}}\int_{\mathbb{R}_{+}^{3}}\Lambda_{3}(y,x)\wedge\left[C(\eta,t)\cdot\hat{\omega}(\eta,0)p_{\mu}(0,\eta,t,y)\right]\mathrm{d}y\mathrm{d}\eta\\
 & \quad+\int_{0}^{t}\int_{\mathbb{R}^{3}}\int_{\mathbb{R}_{+}^{3}}\Lambda_{3}(y,x)\wedge\left[C(\eta,s)\cdot\hat{f}(\eta,s)p_{\mu}(s,\eta,t,y)\right]\mathrm{d}y\mathrm{d}\eta\mathrm{d}s\\
 & \quad+2\mu\int_{0}^{t}\int_{\{\eta_{3}=0\}}\frac{\partial}{\partial\eta_{3}}\Bigg|_{\eta_{3}=0}\int_{\mathbb{R}_{+}^{3}}\Lambda_{3}(y,x)\wedge\left[p_{\mu}(s,\eta,t,y)C(\eta,s)\cdot\omega_{b}(\eta_{1},\eta_{2},s)\right]\mathrm{d}y\mathrm{d}\eta_{1}\mathrm{d}\eta_{2}\mathrm{d}s\\
 & \quad-\int_{\mathbb{R}_{+}^{3}}\Lambda_{3}(y,x)1_{\{y_{3}=0\}}\phi_{b}(y_{1},y_{2},t)\mathrm{d}y\\
 & \quad-\int_{\mathbb{R}^{3}}\int_{\mathbb{R}_{+}^{3}}\Lambda_{3}(y,x)p_{\mu+\lambda}(0,\xi,t,y)\hat{\phi}(\xi,0)\mathrm{d}y\mathrm{d}\xi\\
 & \quad-2(\mu+\lambda)\int_{0}^{t}\int_{\{\xi_{3}=0\}}\frac{\partial}{\partial\xi_{3}}\Bigg|_{\xi_{3}=0}\int_{\mathbb{R}_{+}^{3}}\Lambda_{3}(y,x)\wedge\left[\phi_{b}(\xi,s)p_{\mu+\lambda}(s,\xi,t,x)\right]\mathrm{d}\xi_{1}\mathrm{d}\xi_{2}\mathrm{d}s,
\end{align*}
hence we get 
\begin{align*}
u(x,t) & =\int_{\mathbb{R}^{3}}\mathbb{E}\bigg[1_{\mathbb{R}_{+}^{3}}(X_{t}^{\eta})\Lambda_{2}(X_{t}^{\eta},x)\wedge(C(x,t)\cdot\hat{\omega}(\eta,0))\bigg]\mathrm{d}\eta\\
 & \quad+\int_{0}^{t}\int_{\mathbb{R}^{3}}\mathbb{E}\bigg[1_{\mathbb{R}_{+}^{3}}(X_{t}^{\eta,s})\Lambda_{3}(X_{t}^{\eta,s},x)\wedge(C(x,t)\cdot\hat{f}(\eta,s))\bigg]\mathrm{d}\eta\mathrm{d}s\\
 & \quad+2\mu\int_{0}^{t}\int_{\{\eta_{3}=0\}}\frac{\partial}{\partial\eta_{3}}\Bigg|_{\eta_{3}=0}\mathbb{E}\bigg[1_{\mathbb{R}_{+}^{3}}(X_{t}^{\eta,s})\Lambda_{3}(X_{t}^{\eta,s},x)\wedge(C(\eta,s)\cdot\omega_{b}(\eta_{1},\eta_{2},s))\bigg]\mathrm{d}\eta_{1}\mathrm{d}\eta_{2}\mathrm{d}s\\
 & \quad-\int_{\mathbb{R}^{3}}\mathbb{E}\bigg[\Lambda_{3}(Z_{t}^{\xi,\tau},x)\hat{\phi}(\xi,0)\bigg]\mathrm{d}\xi\\
 & \quad-2(\mu+\lambda)\int_{0}^{t}\int_{\{\xi_{3}=0\}}\frac{\partial}{\partial\xi_{3}}\Bigg|_{\xi_{3}=0}\mathbb{E}\bigg[\Lambda_{3}(Z_{t}^{\xi,\tau},x)\wedge\phi_{b}(\xi,s)\bigg]\mathrm{d}\xi_{1}\mathrm{d}\xi_{2}\mathrm{d}s.
\end{align*}
\end{proof}

\section*{Data Availability Statement}

No data are used in this article to support the findings of this study.

\section*{Declaration of Interests}

The authors report no conflict of interest.

\section*{Acknowledgement}

This work is supported partially by the EPSRC Centre for Doctoral Training in Mathematics of Random Systems: Analysis, Modelling
and Simulation (EP/S023925/1).

%\bibitem{ChenFeldman2010} Chen, G.-Q. G. and Feldman, M. 2010 Global
%solutions to shock reflection by large-angle wedges for potential
%flow. \emph{Ann. of Math}. $\mathbf{171}$: 1019-1134.
%\bibitem{ChenFeldman2018} Chen, G.-Q. G. and Feldman, M. 2018 \emph{The
%mathematics of shock reflection-diffraction and von Neumann's conjectures}.
%Ann. of Math. Studies. Princeton University Press.

\end{document}